\theoremstyle{plain}
\newtheorem{theorem}{Theorem}[section]
\newtheorem{lemma}[theorem]{Lemma}
\newtheorem{proposition}[theorem]{Proposition}
\newtheorem{mtheorem}{Theorem}
\theoremstyle{definition}
\newtheorem{definition}[theorem]{Definition}
\newtheorem{remark}[theorem]{Remark}
\newtheorem{example}[theorem]{Example}
\newtheorem*{notationx}{Notation}
\newcommand{\beq}{\begin{equation}}
\newcommand{\eeq}{\end{equation}}
\newcommand{\beqn}{\begin{equation*}}
\newcommand{\eeqn}{\end{equation*}}
\newcommand{\ie}{\textit{i.e.}}
\newcounter{trinity}
\numberwithin{equation}{section}
\begin{document}

\title{Memory loss for time-dependent dynamical systems}

\author{William Ott}
\address[William Ott]{
Courant Institute of Mathematical Sciences\\
New York, NY 10012, USA}
\urladdr{http://www.cims.nyu.edu/$\sim$ott}
\thanks{William Ott is partially supported by NSF postdoctoral fellowship DMS 0603509.}

\author{Mikko Stenlund}
\address[Mikko Stenlund]{
Courant Institute of Mathematical Sciences\\
New York, NY 10012, USA; Department of Mathematics and Statistics, P.O. Box 68, Fin-00014 University of Helsinki, Finland.}
\urladdr{http://www.math.helsinki.fi/mathphys/mikko.html}
\thanks{Mikko Stenlund is partially supported by a fellowship from the Academy of Finland.}

\author{Lai-Sang Young}
\address[Lai-Sang Young]{
Courant Institute of Mathematical Sciences\\
New York, NY 10012, USA}
\urladdr{http://www.cims.nyu.edu/$\sim$lsy}
\thanks{Lai-Sang Young is partially supported by NSF grant DMS 0600974.}

\keywords{memory loss, time-dependent dynamical systems, coupling, expanding maps,
  piecewise expanding maps}

\subjclass[2000]{37C60, 37C40}

\date{\today}

\begin{abstract} 
  This paper discusses the evolution of probability distributions for certain
  time-dependent dynamical systems. Exponential loss of memory is proved for expanding
  maps and for one-dimensional piecewise expanding maps with slowly varying parameters.
\end{abstract}

\maketitle


\section{Introduction}\label{s:intro}

This paper is about statistical properties of nonautonomous dynamical systems, such as
flows defined by time-dependent vector fields or their discrete-time counterparts
described by compositions of the form $f_n \circ \cdots \circ f_2 \circ f_1$ where all the
$f_i: X \to X$ are self-maps of a space $X$. The topic to be discussed is the degree to
which such a system retains its memory of the past as it evolves with time.

Memory is lost when the initial state of a system is quickly forgotten. Conceptually, this
can happen in two very different ways.  The first is for trajectories to merge, so that in
time, they evolve effectively as a single trajectory independent of their points of
origin.  This happens in systems that are contractive. Consider for example a system
defined by the composition of a sequence of maps $f_i$ of a compact metric space $X$ to
itself, and assume that all the $f_i$ have a uniform Lipschitz constant $L<1$, {\it i.e.},
for all $x,y \in X$, $d(f_ix,f_iy) \leqs Ld(x,y)$. Since the diameter of the image of $X$
decreases exponentially with time, all trajectories eventually coalesce into an
exponentially small blob, which in general continues to evolve with time (except when all
the $f_i$ have the same fixed point).  A similar phenomenon is known to occur in random
dynamical systems. An SDE of the form
\begin{equation}\label{e:gen_sde}
dx_{t} = a(x_{t}) \, dt + \sum_{i=1}^{n} b_{i} (x_{t}) \circ dW_{t}^{i}
\end{equation}
gives rise to a stochastic flow of diffeomorphisms, in which almost every Brownian path
defines a time-dependent flow (see {\it e.g.}~\cite{Kh1997}).  When all of the Lyapunov
exponents are strictly negative, trajectories are known to coalesce into {\it random
  sinks} (see~\cite{Bp1992, LJy1985}).  This phenomenon occurs naturally in applications,
such as the Navier-Stokes system with sufficiently large viscosity (see {\it
  e.g.}~\cite{MnYls2002, Mj1999}), and in certain neural oscillator networks (see {\it
  e.g.}~\cite{LkSBeYls2008}).

In chaotic systems (autonomous or not), memory is lost quickly not through the coalescing
of trajectories but for a diametrically opposite reason, namely their sensitive dependence
on initial conditions.  Small errors multiply quickly with time, so that in practice it is
virtually impossible to track a specific trajectory in a chaotic system.  For this reason,
a statistical approach is often taken.  Let $\rho_0$ denote an initial probability density
with respect to a reference measure $m$, and suppose its time evolution is given by
$\rho_t$. As with individual trajectories, one may ask if these probability distributions
retain memories of their pasts.  We will say a system loses its memory in the statistical
sense if for two initial distributions $\rho_0$ and $\hat \rho_0$, $\int |\rho_t-\hat
\rho_t| \, dm \to 0$ as $t \to \infty$.  It is this form of memory loss that is studied in
the present paper. Of particular interest is when memory is lost quickly: we say a system
has {\it exponential statistical loss of memory} if there is a number $\alpha>0$ such that
for any $\rho_0$ and $\hat \rho_0$, $\int |\rho_t-\hat \rho_t| \, dm < C e^{-\alpha t}$.
Such memory loss may happen over a finite time interval, \ie, for $t \leqs T$, or for all
$t \geqs 0$.

Observe that while the two forms of memory loss described above are quite different on the
phenomenological level, the latter can be seen mathematically as a manifestation of the
first: By viewing $\{\rho_t\}_{t \geqs 0}$ as a trajectory in the space of probability
densities, statistical loss of memory is equivalent to $\rho_t$ and $\hat \rho_t$ having a
common future.  The results of this paper are based on this point of view.

\smallskip Before proceeding to specific results, we first describe a model that we think
is very useful to keep in mind, even though the analysis of this model is somewhat beyond
the scope of the present work.

\begin{example}
  {\bfseries Lorentz gas with slowly moving scatterers.}  The $2$-dimensional periodic
  Lorentz gas is usually modeled by the uniform motion of a particle in a domain $X =
  {\mathbb T}^2 \setminus \bigcup_i \Gamma_i$ where the $\Gamma_i$ are pairwise disjoint
  convex subsets of ${\mathbb T}^2$ and the particle bounces off the ``walls'' of this
  domain (equivalently the boundaries of the scatterers) according to the rule that the
  angle of incidence is equal to the angle of reflection.  In this model, the scatterers
  represent very heavy particles or ions, which move so slowly relative to the light
  particle (the one whose motion is described by the billiard flow) that one generally
  assumes they are fixed.  This is the traditional setup in billiard studies.  In reality,
  however, these large particles are bombarded by many light particles, and we focus on
  only one tagged light particle. The bombardments do cause the large particles to move
  about, though very slowly, and effectively independently of the motion of the tagged
  particle.  Thus one can argue that it is more realistic to model the situation as a
  billiard flow in a {\it slowly varying environment}, {\it i.e.}, where the positions of
  the scatterers change very slowly with time.  (See the recent work~\cite{CnDd2008},
  which attempts to model the motion of a single heavy particle.)
\end{example}

In this paper, we prove exponential loss of memory in the statistical sense discussed
above for time-dependent systems defined by expanding and piecewise expanding maps, the
latter in one dimension only. Expanding maps (time-dependent or not) provide the simplest
paradigms for exponential loss of memory in the statistical sense; we use them to
illustrate our ideas on the most basic level as their analysis requires few technical
considerations.  Piecewise expanding maps, on the other hand, begin to exhibit some of the
characteristics of the time-dependent billiard maps in the guiding example above.  Our
results can therefore be seen as a first step toward this physically relevant system.

The results of this paper apply to finite as well as infinite time, and our setting
extends not only that of iterations of single maps (for which results on correlation decay
for expanding maps and $1D$ piecewise expanding maps are not new), but it also includes
skew products in which fiber dynamics are of these types as well as random compositions.
What is different and new here is that the stationarity of the process is entirely
irrelevant.  Nor do the constituent maps have to belong to a bounded family, in which case
the rates of memory loss may vary accordingly.  A study which is closest to ours in spirit
is~\cite{LaYj1996}.

Coupling methods are used in this paper, although we could have used spectral arguments,
the Hilbert metric, or other techniques (see {\itshape e.g.}~\cite{BlSyCn1991, Cn2006,
  Lc1995a, Rd1989, Rd2004, Yls1998, Yls1999}).  We do not claim that our methods are
novel. On the contrary, one of the points of this paper is that under suitable conditions,
existing methods for autonomous systems can be adapted to give results for this
considerably broader class of dynamical settings, and we identify some of these
conditions.  Finally, even though coupling arguments have been used in more sophisticated
settings, see \textit{e.g.}~\cite{BxFrGa1999, BxLc2002, Cn2006, Yls1999}, we were unable
to locate a coupling-based proof for single expanding maps.  Section~\ref{s:uemaps} will
include this as a special case.

\begin{notationx}
The following notation is used
throughout: given $f_i: X \to X$ for $i \in \mbb{N}$,
\begin{enumerate}
\item
for $n \geqs m$, we write $F_{n,m}= f_n \circ \cdots \circ
f_m$;
\item
for $n \geqs 1$, we write $F_n = F_{n,1}$.
\end{enumerate}
\end{notationx}


\section{Time-dependent expanding maps}\label{s:uemaps}

\subsection{Results}\label{ss:ue_stat_res}
Let $M$ be a compact, connected Riemannian manifold without boundary.  A smooth map
$f:M\to M$ is called {\it expanding} if there exists $\lambda>1$ such that
\begin{equation*}
|Df(x)v|\geqs \lambda |v|
\end{equation*}
for every $x \in M$ and every tangent vector $v$ at $x$.  Expanding maps provide the
simplest examples of systems with exponential loss of statistical memory.

First we introduce some frequently-used notation.  If $\nu$ is a Borel probability measure
on $M$, then we let $f_*\nu$ denote the measure obtained by transporting $\nu$ forward
using $f$, {\it i.e.}, $f_*\nu(E)= \nu(f^{-1}E)$ for all Borel sets $E$.  If $d\nu=\varphi
\, dm$ where $m$ is the Riemannian measure on $M$, then the density of $f_*\nu$ is given
by ${\mathcal P}_f(\varphi)$ where
\begin{equation*}
{\mathcal P}_f(\varphi)(x) \defas \sum_{y\in f^{-1}x}\frac{\varphi(y)}{|\det Df(y)|}.
\end{equation*}
Here ${\mathcal P}_f$ is the transfer operator associated with the map $f$; ${\mathcal
  P}_{F_n}$ is defined similarly.

In order to have a uniform rate of memory loss, we need
to impose some bounds on the set of mappings to be
composed. For $\lambda \geqs 0$ and $\Gamma \geqs 0$, define
\begin{equation*}
\mathcal {E}(\lambda, \Gamma)\defas\big\{f:M\to M : \| f \|_{\mathcal{C}^2} \leqs \Gamma,
\; \: |Df(x)v|\geqs \lambda |v| \; \: \forall \, (x,v)\big\}
\end{equation*}
and let 
\begin{equation*}
\mathcal{D}\defas \big\{ \varphi> 0 : \int\varphi \,dm=1, \; \: \text{$\varphi$ is Lipschitz}\big\}.
\end{equation*}

\begin{mtheorem}\label{mt:ue_maps}
  Given $\lambda$ and $\Gamma$ with $\lambda>1$, there exists a constant $\Lambda =
  \Lambda(\lambda, \Gamma)\in(0,1)$ such that for any sequence $f_i \in {\mathcal
    E}(\lambda, \Gamma)$ and any $\varphi, \psi \in {\mathcal D}$, there exists
  $C_{(\varphi,\psi)}$ such that
\begin{equation}\label{match}
\int |{\mathcal P}_{F_n}(\varphi)-{\mathcal P}_{F_n}(\psi)| \, dm\leqs C_{(\varphi,\psi)}
\Lambda^n \quad \forall \, n\geqs 0.
\end{equation}
\end{mtheorem}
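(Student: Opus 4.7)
My plan is a coupling argument built on an invariant log-Lipschitz cone. The first step is a distortion estimate: pairing preimages of nearby points $x, x'$ via inverse branches of $f$ (which are $\lambda^{-1}$-contractions) and using the bound on $\operatorname{Lip}(\log|\det Df|)$ coming from $\|f\|_{\mathcal{C}^2} \leqs \Gamma$, I would derive
\begin{equation*}
\operatorname{Lip}(\log \mathcal{P}_f \varphi) \leqs \lambda^{-1}\operatorname{Lip}(\log \varphi) + K
\end{equation*}
for every positive density $\varphi$ and every $f \in \mathcal{E}(\lambda, \Gamma)$, where $K = K(\lambda, \Gamma)$. Fixing $L_0 := K/(1 - \lambda^{-1})$, the cone $\mathcal{C} := \{\varphi \in \mathcal{D} : \operatorname{Lip}(\log \varphi) \leqs L_0\}$ is then invariant under every $\mathcal{P}_f$ with $f \in \mathcal{E}(\lambda, \Gamma)$. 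Members of $\mathcal{C}$ satisfy $\max \varphi / \min \varphi \leqs e^{L_0 \operatorname{diam}(M)}$, which together with $\int \varphi \, dm = 1$ yields uniform pointwise bounds $c_0 \leqs \varphi \leqs C_0$ with $c_0, C_0$ depending only on $\lambda, \Gamma, M$. Any $\varphi \in \mathcal{D}$ enters $\mathcal{C}$ after some $N_0(\varphi)$ iterations, with $N_0$ determined by $\operatorname{Lip}(\log \varphi)$.

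Once $\mathcal{P}_{F_{N_0}} \varphi$ and $\mathcal{P}_{F_{N_0}} \psi$ both lie in $\mathcal{C}$, and hence both exceed $c_0$ pointwise, I couple by extracting the common constant mass: write
\begin{equation*}
\mathcal{P}_{F_{N_0}}\varphi = c_0 + \eta\, \bar\varphi,\qquad \mathcal{P}_{F_{N_0}}\psi = c_0 + \eta\, \bar\psi, \qquad \eta := 1 - c_0\, m(M) \in (0, 1),
\end{equation*}
with $\bar\varphi, \bar\psi$ non-negative densities of integral one and with Lipschitz constants bounded by $L_1 := L_0 C_0/\eta$ (inherited from $\mathcal{C}$, since $|\nabla\varphi| \leqs L_0 \varphi \leqs L_0 C_0$). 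By linearity, $\mathcal{P}_{F_n}\varphi - \mathcal{P}_{F_n}\psi = \eta(\mathcal{P}_{F_{n, N_0 + 1}}\bar\varphi - \mathcal{P}_{F_{n, N_0 + 1}}\bar\psi)$ for all $n \geqs N_0$, yielding a factor $\eta$. To iterate the decay I need what I will call a \emph{uniform return lemma}: there exists $N_1 = N_1(\lambda, \Gamma, L_1)$ such that for any non-negative Lipschitz density $g$ with $\int g\, dm = 1$ and $\operatorname{Lip}(g) \leqs L_1$, and for any admissible sequence of maps, $\mathcal{P}_{F_{N_1}} g \in \mathcal{C}$.

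The uniform return lemma is the main obstacle, because the split remainders may vanish and therefore lie outside the log-Lipschitz cone. My argument would proceed in two phases: first, the Lipschitz constraint combined with $\int g\, dm = 1$ forces $g \geqs 1/(2 m(M))$ on a ball of radius $\sim 1/L_1$ around a point where $g$ attains its mean; second, because each $f_i$ is expanding and $\mathcal{C}^2$-bounded, $F_n^{-1}(x)$ becomes $O(\lambda^{-n})$-dense in $M$, so for $n$ sufficiently large every $x \in M$ has a preimage in this ball and $\mathcal{P}_{F_n} g$ acquires a quantitative positive lower bound. The plain-Lipschitz recursion $\operatorname{Lip}(\mathcal{P}_f g) \leqs \lambda^{-1}\operatorname{Lip}(g) + K'\|g\|_\infty$ keeps the regularity uniformly controlled throughout this phase; once strict positivity is in hand, the log-Lipschitz contraction of the first step drives the density into $\mathcal{C}$ within a further uniformly bounded number of iterations. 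Composing the split-and-return operation in blocks of length $N_1$ produces one factor $\eta$ per block, so that
\begin{equation*}
\int |\mathcal{P}_{F_n}\varphi - \mathcal{P}_{F_n}\psi|\, dm \leqs 2\, \eta^{\lfloor (n - N_0)/N_1 \rfloor} \leqs C_{(\varphi, \psi)}\, \Lambda^n,
\end{equation*}
with $\Lambda := \eta^{1/N_1} \in (0,1)$ and $C_{(\varphi, \psi)}$ absorbing the initial entry time $N_0(\varphi, \psi)$ into the cone.
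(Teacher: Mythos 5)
Your proposal is correct in its overall architecture and that architecture coincides with the paper's: regularize into a distortion-controlled class, extract a common constant piece of mass, renormalize the remainders, and iterate in blocks of uniform length. The genuine divergence is in how the remainder after extraction is treated. You subtract the \emph{entire} lower bound $c_0$, so the renormalized remainders $\bar\varphi,\bar\psi$ may vanish, fall out of the log-Lipschitz cone, and force you to prove a ``uniform return lemma'' re-establishing strict positivity from scratch (via $O(\lambda^{-n})$-density of $F_n^{-1}(x)$, which does hold for compositions since each $f_i$ is an expanding covering map, plus a plain-Lipschitz recursion to hold regularity in the interim). The paper sidesteps this entirely by subtracting only \emph{half} of the available lower bound $\kappa$: the remainder then stays bounded below by $\tfrac12\kappa$, the renormalized density lands in $\mathcal{D}_{2L^*}$ (the same ratio-distortion class with a doubled constant, Lemma~\ref{l:match_smooth}), and the single regularization statement (Proposition~\ref{lem:smoothed}) re-applies verbatim. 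Your route is workable --- the two-phase return argument you sketch can be made rigorous with uniform constants --- but it is strictly more labor than the half-subtraction trick, which keeps the whole iteration inside the class of strictly positive densities.

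One technical slip to repair: you set $L_0 := K/(1-\lambda^{-1})$, the exact fixed point of the recursion $a \mapsto \lambda^{-1}a + K$. The cone $\{\operatorname{Lip}(\log\varphi)\leqs L_0\}$ is then invariant, but a density starting with $\operatorname{Lip}(\log\varphi) > L_0$ only satisfies $\operatorname{Lip}(\log\mathcal{P}_{F_n}\varphi) \leqs L_0 + \lambda^{-n}(\operatorname{Lip}(\log\varphi) - L_0)$, which never certifies entry into the cone in finite time. Take $L_0$ strictly larger than the fixed point (the paper's choice amounts to $L^* = 4C_0$ versus a limiting value of $2C_0$), and the entry time $N_0(\varphi)$ and the return time $N_1$ become genuinely finite. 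The same care is needed at the end of your return lemma, where you again invoke the log-Lipschitz contraction to ``drive the density into $\mathcal{C}$.''
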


\begin{remark}
We have assumed in Theorem~\ref{mt:ue_maps} that all of the $f_{i}$ are in a single
$\mcal{E} (\la, \Ga)$.  It will become clear that more general results in which $\la$ and
$\Ga$ are allowed to vary with $i$ can be formulated and proved by concatenating the
arguments below.
\end{remark}

\begin{remark}
  Correlation decay for expanding maps has been studied before.  For a single map, see
  {\itshape e.g.}~\cite{Rd1989, Rm1989}.  For random compositions, see {\itshape
    e.g.}~\cite{BvYls1993, BvYls1994}.  For time-dependent maps,~\cite{LaYj1996} proves
  that $\int |\mcal{P}_{F_{n}} (\vp) - \mcal{P}_{F_{n}} (\psi)| \, dm \to 0$ as $n \to
  \infty$ without discussing the rate of convergence.
\end{remark}

\subsection{Outline of proof}

Let $\ve >0$ be a small number to be determined, and fix $\lambda_0>1$ so that for
all $f \in {\mathcal E}={\mathcal E}(\lambda, \Gamma)$, we have $d(fx,fy) \geqs \lambda_0
d(x,y)$ whenever $d(x,y)< \ve$.  Here $d(\cdot,\cdot)$ denotes Riemannian distance.
For $L>0$, we define
\begin{equation*}
  \mathcal{D}_L\defas \left\{\varphi>0 : \int\varphi \,dm=1,\; \:
    \left|\frac{\varphi(x)}{\varphi(y)}-1\right | \leqs L d(x,y) \; \,\text{if} \; \, d(x,y)<\ve \right\}.
\end{equation*}
Notice that ${\mathcal D} = \bigcup_{L >0} {\mathcal D}_L$: For $\varphi \in {\mathcal
  D}$,
\begin{equation*}
\left|\frac{\varphi (x)}{\varphi (y)} - 1 \right|
= \frac{1}{\varphi (y)} |\varphi (x)-\varphi (y)|
\leqs \frac{{\rm Lip}(\varphi)}{\min(\varphi)} d(x,y);
\end{equation*}
functions in ${\mathcal D}_L$ are clearly locally Lipschitz. Key to the proof is the
following observation:

\begin{proposition}\label{lem:smoothed}
  There exists $L^{*} > 0$ for which the following holds.  For any $L>0$, there exists
  $\tau(L) \in {\mathbb Z}^+$ such that for all $\varphi \in {\mathcal D}_L$ and $f_i \in
  {\mathcal E}$, ${\mathcal P}_{F_n}(\varphi)\in\mathcal{D}_{L^{*}}$ for all $n \geqs
  \tau(L)$.
\end{proposition}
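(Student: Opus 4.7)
The plan is to prove a one-step contraction estimate for the transfer operator and then iterate. Concretely, I will show that there exist constants $\beta \in (0,1)$ and $C > 0$, depending only on $\lambda$ and $\Gamma$, such that whenever $\vp \in \mathcal{D}_L$ and $f \in \mathcal{E}$,
\begin{equation*}
\mathcal{P}_f \vp \in \mathcal{D}_{L'}, \qquad L' \leqs \beta L + C.
\end{equation*}
Iterating this inequality along the composition $F_n = f_n \circ \cdots \circ f_1$ gives $L_n \leqs \beta^n L + C/(1-\beta)$, so we may set $L^{*} \defas 2C/(1-\beta)$ and pick $\tau(L)$ so that $\beta^{\tau(L)} L < L^{*}/2$.

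For the one-step estimate, fix $x, x' \in M$ with $d(x, x') < \ve$. Shrinking $\ve$ once and for all in terms of $(\lambda, \Gamma)$, every preimage $y \in f^{-1}x$ has a unique partner $y' \in f^{-1}x'$ in the same inverse branch, with $d(y, y') \leqs \lambda_0^{-1} d(x, x') < \ve$. The first step is the pointwise ratio
\begin{equation*}
\frac{\vp(y)/|\det Df(y)|}{\vp(y')/|\det Df(y')|}
 = \frac{\vp(y)}{\vp(y')} \cdot \frac{|\det Df(y')|}{|\det Df(y)|}.
\end{equation*}
The first factor lies in $[1 - L d(y,y'), 1 + L d(y,y')]$ since $\vp \in \mathcal{D}_L$. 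Because $\|f\|_{\mathcal{C}^2} \leqs \Gamma$ and $|\det Df| \geqs \lambda^{\dim M}$, $\log |\det Df|$ is Lipschitz with a constant $K = K(\lambda, \Gamma)$, so the second factor lies in $[1 - K d(y,y'), 1 + K d(y,y')]$. Multiplying and using $d(y,y') \leqs \lambda_0^{-1} d(x,x')$, each pair ratio differs from $1$ by at most
\begin{equation*}
\bigl( \lambda_0^{-1} L + \lambda_0^{-1} K + \lambda_0^{-2} L K \ve \bigr) d(x, x').
\end{equation*}

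The second step is the elementary summation inequality: for positive $a_i, b_i$,
\begin{equation*}
\left|\frac{\sum_i a_i}{\sum_i b_i} - 1 \right| \leqs \max_i \left|\frac{a_i}{b_i} - 1 \right|.
\end{equation*}
Applied to $a_i = \vp(y_i)/|\det Df(y_i)|$ and $b_i = \vp(y_i')/|\det Df(y_i')|$, this gives
\begin{equation*}
\left| \frac{\mathcal{P}_f \vp(x)}{\mathcal{P}_f \vp(x')} - 1 \right| \leqs L' d(x, x'), \qquad L' \defas \lambda_0^{-1}(1 + K\lambda_0^{-1}\ve)\,L + \lambda_0^{-1} K,
\end{equation*}
which is exactly the one-step estimate with $\beta = \lambda_0^{-1}(1 + K\lambda_0^{-1}\ve)$ and $C = \lambda_0^{-1} K$. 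Choosing $\ve$ small enough that $K\lambda_0^{-1}\ve < \lambda_0 - 1$ ensures $\beta < 1$. Since each $f_i \in \mathcal{E}(\lambda, \Gamma)$, the same estimate applies at every step of the composition, and the stated iteration goes through uniformly in the sequence $\{f_i\}$.

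The main technical nuisance, rather than a deep obstacle, is controlling the cross term $\lambda_0^{-2} L K \ve \, d(x,x')$ that appears when multiplying the two linear bounds: it must be absorbed into the $\beta L$ term without spoiling the contraction, and this is what forces $\ve$ to be chosen small depending on $(\lambda, \Gamma)$. Two subsidiary points also need checking: that corresponding preimages of $\ve$-close points are actually $\ve$-close (guaranteed by uniform expansion), and that $\log|\det Df|$ is genuinely Lipschitz with a $(\lambda, \Gamma)$-uniform constant (immediate from the $\mathcal{C}^2$ bound together with the lower bound on $|\det Df|$). Once these are in hand, the proposition follows from the one-step inequality and a geometric sum.
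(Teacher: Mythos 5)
Your argument is correct, and it reaches the same conclusion as the paper by the same ingredients (branch-by-branch ratio estimates, Lipschitz control of $\log|\det Df|$, contraction of inverse branches, and the mediant inequality for $\sum a_i/\sum b_i$, which the paper uses implicitly when it "sums over $i$ before dividing by $\varphi_n$"). The organization, however, is genuinely different. The paper proves the $n$-step statement directly: its Lemma~\ref{lem:distortion} telescopes the Jacobian distortion along an entire orbit segment into a single constant $C_0 = C_1/(\lambda_0-1)$, so that one estimate $|\log(\varphi_n^i(x)/\varphi_n^i(y))| \leqs (L\lambda_0^{-n}+C_0)\,d(x,y)$ yields the result with $L^*=4C_0$ and $\tau(L)$ defined by $L\lambda_0^{-\tau(L)}\leqs C_0$. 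You instead prove a one-step Lasota--Yorke-type recursion $L' \leqs \beta L + C$ for the log-Lipschitz parameter and sum the geometric series afterwards; the per-step contribution $\lambda_0^{-1}K$ accumulates to the same geometric sum that the paper packages inside $C_0$. Your version has the advantage of making the structural parallel with inequality~\eqref{e:genlyi} in Section~\ref{s:pw_expand} explicit, and of being manifestly insertable mid-composition; the paper's version avoids having to track the cross term $\lambda_0^{-2}LK\ve$ at every step.

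One small imprecision worth fixing: from the Lipschitz bound on $\log|\det Df|$ you may conclude that the Jacobian ratio lies in $[e^{-Kd(y,y')}, e^{Kd(y,y')}]$, not in $[1-Kd(y,y'),\,1+Kd(y,y')]$; the upper end exceeds $1+Kd$. This costs only a bounded multiplicative factor once $\ve$ is small (exactly the factor of $2$ the paper inserts in~\eqref{e:ly-expanding}), and it is absorbed into your constants without affecting $\beta<1$, but as written the interval claim is not literally correct and should be stated with the exponential, or with $K$ replaced by a slightly larger constant.
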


As our proof in Section~\ref{ss:ue_pf_detls} will show, the choice of $L^*$ is arbitrary,
provided it is greater than a number determined by $\lambda$ and $\Gamma$.

Now let $f_i \in {\mathcal E}$ and $\varphi, \psi \in {\mathcal D}$ be given.  Then there
exists $N_0=N_0(\varphi, \psi)$ such that both ${\mathcal P}_{F_{N_0}}(\varphi)$ and
${\mathcal P}_ {F_{N_0}}(\psi)$ are in ${\mathcal D}_{L^*}$.  This waiting period is the
reason for the prefactor $C_{(\varphi, \psi)}$ on the right side of~\eqref{match}.  With
this out of the way, we may assume we start with two densities $\varphi, \psi \in
{\mathcal D}_{L^*}$ from here on.

Notice that all functions in ${\mathcal D}_{L^*}$ are $\geqs \kappa$ for some constant
$\kappa>0$; it is easy to see from the definition of ${\mathcal D}_{L^*}$ that they have
uniform lower bounds on $\ve$-disks.  We think of the measures $\varphi \, dm$ and
$\psi \, dm$ as having a part, namely $\kappa \, dm$, in common.  Since $(F_n)_*(\kappa \,
dm)$ will also be common to both $(F_n)_*(\varphi \, dm)$ and $(F_n)_*(\psi \, dm)$, we
regard this part of the two measures as having been ``matched''.  In order to retain
control of distortion bounds, however, we will ``match'' only half of what is permitted,
and renormalize the ``unmatched part'' as follows: Let
\begin{equation}
\label{renorm}
\hat \varphi = \frac{\varphi - \frac12 \kappa}
{1- \frac12 \kappa \cdot m(M)} \quad \text{and} \quad
\hat \psi = \frac{\psi - \frac12 \kappa}
{1- \frac12 \kappa \cdot m(M)}\ .
\end{equation}

\begin{lemma}\label{l:match_smooth}
  For $\varphi\in\mathcal{D}_{L^{*}}$, if $\hat \varphi$ is as above, then $\hat \varphi
  \in \mathcal{D}_{2L^{*}}$.
\end{lemma}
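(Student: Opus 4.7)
The plan is to verify each of the three defining conditions of $\mathcal{D}_{2L^*}$ for $\hat\varphi$: positivity, unit integral, and the local Lipschitz-ratio bound with constant $2L^*$. The first two are essentially bookkeeping, and the third is where the factor of $2$ comes in naturally.

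First I would record the pointwise lower bound $\varphi \geqs \kappa$ implicit in the setup (every element of $\mathcal{D}_{L^*}$ is bounded below by some uniform $\kappa > 0$). From $\int \varphi \, dm = 1$ and $\varphi \geqs \kappa$ I get $\kappa \cdot m(M) \leqs 1$, so the denominator $1 - \tfrac12 \kappa \cdot m(M)$ lies in $[\tfrac12, 1)$ and is in particular positive. Since $\varphi - \tfrac12 \kappa \geqs \tfrac12 \kappa > 0$, positivity of $\hat\varphi$ follows. The normalization $\int \hat\varphi \, dm = 1$ is immediate from $\int(\varphi - \tfrac12\kappa)\,dm = 1 - \tfrac12 \kappa \cdot m(M)$.

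The main (and really only nontrivial) step is the Lipschitz-ratio estimate. For $d(x,y) < \ve$, the constant $\tfrac12\kappa \cdot m(M)$ in numerator and denominator of $\hat\varphi$ cancel in the ratio:
\begin{equation*}
\left| \frac{\hat\varphi(x)}{\hat\varphi(y)} - 1 \right|
= \frac{|\varphi(x) - \varphi(y)|}{\varphi(y) - \tfrac12 \kappa}.
\end{equation*}
From $\varphi \in \mathcal{D}_{L^*}$ I have $|\varphi(x) - \varphi(y)| \leqs L^* d(x,y)\, \varphi(y)$, and from $\varphi(y) \geqs \kappa$ I have $\varphi(y) - \tfrac12\kappa \geqs \tfrac12 \varphi(y)$. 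Plugging these in gives the bound $2 L^* d(x,y)$, as required.

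I do not expect any real obstacle; the $2L^*$ constant is tuned precisely so that subtracting half of the common mass $\kappa$ (rather than all of it) leaves a safety factor of $2$ in the ratio $\varphi(y)/(\varphi(y)-\tfrac12\kappa)$. This is exactly the reason the authors emphasized in the preceding paragraph that only half of the common part is matched at each step: matching the full $\kappa$ would force the denominator $\varphi(y) - \kappa$ to vanish where $\varphi(y) = \kappa$, destroying distortion control.
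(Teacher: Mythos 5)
Your proof is correct and is essentially the paper's own argument: cancel the normalizing constant in the ratio $\hat\varphi(x)/\hat\varphi(y)$, and use $\varphi \geqs \kappa$ to show the denominator $\varphi(y)-\tfrac12\kappa$ loses at most a factor of $2$ relative to $\varphi(y)$. The positivity and normalization checks you add are trivial bookkeeping the paper leaves implicit, and your closing remark about why only half of $\kappa$ is subtracted matches the authors' stated motivation.
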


Let $N=\tau(2L^*)$ be given by Proposition~\ref{lem:smoothed}.  Then $\bar \varphi_N
\defas {\mathcal P}_{F_N} (\hat \varphi)$ and $\bar \psi_N \defas {\mathcal P}_{F_N}(\hat
\psi)$ are in ${\mathcal D}_{L^*}$. We subtract off $\frac12 \kappa$ from each of $\bar
\varphi_N$ and $\bar \psi_N$ and renormalize as in~\eqref{renorm}, obtaining $\hat
\varphi_N$ and $\hat \psi_N$ respectively. By Lemma~\ref{l:match_smooth}, they are in
${\mathcal D}_{2L^*}$.  In general, given $\hat \varphi_{(k-1)N}, \hat \psi_{(k-1)N} \in
{\mathcal D}_{2L^*}$, we let
\begin{equation*}
\bar \varphi_{kN} \defas {\mathcal P}_{F_{kN,(k-1)N+1}}
(\hat \varphi_{(k-1)N}) \quad \text{and} \quad \bar \psi_{kN}\defas 
{\mathcal P}_{F_{kN,(k-1)N+1}} (\hat \psi_{(k-1)N}). 
\end{equation*}
By Proposition~\ref{lem:smoothed}, $\bar \varphi_{kN}, \bar \psi_{kN} \in {\mathcal
  D}_{L^*}$. We subtract off $\frac12 \kappa$ and renormalize to obtain $\hat
\varphi_{kN}$ and $\hat \psi_{kN}$ in ${\mathcal D}_{2L^*}$ (Lemma~\ref{l:match_smooth}),
completing the induction.

Since a fraction of $\frac12 \kappa \cdot 
m(M)$ of the not-yet-matched parts of the measures
is matched every $N$ steps, we obtain
\begin{equation*}
\int |{\mathcal P}_{F_n}(\varphi) - 
{\mathcal P}_{F_n}(\psi)| \, dm \leqs  
2(1 - \frac{1}{2} \kappa \cdot m(M))^k \quad \text{for} \quad
kN \leqs n < (k+1)N.
\end{equation*}
This leads directly to the asserted exponential estimate.  \hfill $\blacksquare$
 
\begin{remark}
  Theorem~\ref{mt:ue_maps} also holds for initial densities that are not strictly positive
  provided one is able to guarantee that they eventually evolve into densities that are
  strictly positive. One way to make this happen is to have sufficiently many of the
  initial $f_i$ remain in a small enough neighborhood of some fixed $f \in {\mathcal E}$,
  and take advantage of the fact that every expanding map $f$ has the property that given
  any open set $U \subset M$, there exists $N(U) \in \mbb{N}$ such that $f^{n} (U) \supset
  M$ for all $n \geqs N(U)$.
\end{remark}

\subsection{Details of proof}\label{ss:ue_pf_detls}

We begin with an essential distortion estimate.

\begin{lemma}\label{lem:distortion}
  There exists a constant $C_0$ depending on $\lambda_{0}$ and $\Gamma$ such that
\begin{equation*}
\frac{|\det DF_n(x)|}{|\det DF_n(y)|} \leqs e^{C_0 d(F_n(x),F_n(y))}
\end{equation*}
for all $x, y \in M$ and $n \in \mbb{Z}^{+}$ with the property that
$d(F_k(x),F_k(y))<\ve$ for all $k < n$.
\end{lemma}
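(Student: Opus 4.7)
The plan is a standard chain-rule plus backward-contraction argument along the orbit. First I would write
\[
\log\frac{|\det DF_n(x)|}{|\det DF_n(y)|}
= \sum_{k=0}^{n-1} \log\frac{|\det Df_{k+1}(F_k(x))|}{|\det Df_{k+1}(F_k(y))|},
\]
using the chain rule (with $F_0 = \mathrm{id}$). Each $f_{k+1} \in \mathcal{E}(\lambda,\Gamma)$ satisfies $\|f_{k+1}\|_{C^2} \leqs \Gamma$, so $|\det Df_{k+1}|$ has first derivative bounded in terms of $\Gamma$; combined with the lower bound $|\det Df_{k+1}| \geqs \lambda^{\dim M}$ coming from the expansion hypothesis, the function $\log|\det Df_{k+1}|$ is Lipschitz on $M$ with some constant $C_1 = C_1(\lambda_0,\Gamma)$ independent of $k$. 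Hence
\[
\left|\log\frac{|\det DF_n(x)|}{|\det DF_n(y)|}\right|
\leqs C_1 \sum_{k=0}^{n-1} d(F_k(x), F_k(y)).
\]

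Next I would exploit the hypothesis $d(F_k(x),F_k(y))<\ve$ for all $k<n$ to control this sum by the terminal distance $d(F_n(x),F_n(y))$. By the choice of $\lambda_0$, whenever $d(F_k(x),F_k(y))<\ve$ we have $d(F_{k+1}(x),F_{k+1}(y))\geqs \lambda_0 d(F_k(x),F_k(y))$. Iterating from step $k$ up to step $n$ (each intermediate distance stays below $\ve$ by assumption) yields
\[
d(F_k(x),F_k(y)) \leqs \lambda_0^{-(n-k)}\, d(F_n(x),F_n(y)).
\]
Summing the geometric series gives
\[
\sum_{k=0}^{n-1} d(F_k(x),F_k(y)) \leqs \frac{1}{\lambda_0 - 1}\, d(F_n(x),F_n(y)),
\]
and combining with the bound above produces the desired estimate with $C_0 = C_1/(\lambda_0-1)$, a constant depending only on $\lambda_0$ and $\Gamma$. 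Exponentiating gives the lemma.

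I do not expect any serious obstacle: the only point to be careful about is that the pointwise backward-contraction step needs the smallness assumption $d(F_k(x),F_k(y))<\ve$ at every intermediate time, which is exactly what the hypothesis provides, and that the Lipschitz constant of $\log|\det Df|$ genuinely depends on a uniform lower bound for $|\det Df|$ (supplied by the expansion) in addition to the $C^2$ norm bound $\Gamma$. Both are in place, so the argument is routine.
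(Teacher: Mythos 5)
Your argument is correct and is essentially identical to the paper's proof: the same chain-rule decomposition of $\log\frac{|\det DF_n(x)|}{|\det DF_n(y)|}$, the same use of a uniform Lipschitz constant $C_1$ for $\log|\det Df|$ over the family $\mathcal{E}$, and the same backward-contraction estimate $d(F_k(x),F_k(y))\leqs \lambda_0^{-(n-k)}d(F_n(x),F_n(y))$ summed as a geometric series to yield $C_0=C_1/(\lambda_0-1)$. No gaps.
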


\begin{proof}[Proof of Lemma~\ref{lem:distortion}]
We have
\begin{align*}
\log \frac{|\det DF_n(x)|}{|\det DF_n(y)|} &= \sum_{k=1}^n \left( \log |\det Df_k(F_{k-1}(x))|  -  \log |\det Df_k(F_{k-1}(y))|\right) \\
&\leqs \sum_{k=0}^{n-1} C_1 d(F_{k}(x), F_{k}(y)) \leqs \sum_{k=0}^{n-1} C_1 \lambda_0^{-(n-k)} d(F_{n}(x), F_{n}(y))\\
&\leqs \frac{C_1}{\lambda_0-1}\,d(F_n(x),F_n(y)),
\end{align*}
where $C_1$ is an upper bound on the Lipschitz constant of the $\mathcal{C}^1$ function
$\log|\det Df|$ for any function $f$ in the family $\mathcal{E}$.
\end{proof}

We are in position to prove Proposition~\ref{lem:smoothed}, which asserts the existence of
$L^{*} > 0$ such that $\mathcal{D}_{L^{*}}$ attracts densities.

\begin{proof}[Proof of Proposition~\ref{lem:smoothed}]
  Let $y \in D(x,\ve)$ where $D(x,\ve)$ is the disk of radius $\ve$
  centered at $x$. We let $G_{n,i}$ be the $i^{\text{th}}$ branch of
  $F_n^{-1} | D(x,\ve)$, and let
\begin{equation*}
\varphi_n^i\defas  \frac{\varphi\circ G_{n,i}}{|\det DF_n\circ G_{n,i}|}.
\end{equation*}
Then $\varphi^i_n$ is the contribution to the density $\varphi_n\defas {\mathcal
  P}_{F_n}(\varphi) =\sum_i \varphi_n^i$ obtained by pushing along the $i^{\text{th}}$
branch. Estimating distortion one branch at a time, we have
\begin{equation*}
\frac{\varphi_n^i(x)}{\varphi_n^i(y)} = \left(\frac{\varphi(G_{n,i}(x))}{\varphi(G_{n,i}(y))}\right) \cdot 
\left(\frac{|\det DF_n(G_{n,i}(y))|}{|\det DF_n(G_{n,i}(x))|}\right). 
\end{equation*}
To estimate the first factor on the right, we use $d(G_{n,i}(x),G_{n,i}(y)) <
\lambda_0^{-n} d(x,y)$ and $\varphi\in\mathcal{D}_{L}$. To estimate the second factor, we
use Lemma~\ref{lem:distortion}.  Combining the two, we obtain
\begin{equation*}
\left|\log \frac{\varphi_n^i(x)}{\varphi_n^i(y)}\right| \leqs (L \lambda_0^{-n} + C_0) d(x,y).
\end{equation*}
Exponentiating, moving $\varphi^i_n(y)$ to the right side, and summing over $i$ before
dividing by $\vp_n$ again, we obtain
\begin{equation*}
\frac{\varphi_n(x)}{\varphi_n(y)} \leqs e^{(L \lambda_0^{-n}+C_0)d(x,y)}.
\end{equation*}
By taking $\ve$ small enough, we may assume
\begin{equation}\label{e:ly-expanding}
\left| \frac{\varphi_n(x)}{\varphi_n(y)} -1 \right| \leqs
 2 \left|\log \frac{\varphi_n(x)}{\varphi_n(y)}\right|
 \leqs 2 (L \lambda_{0}^{-n} + C_0) d(x,y).
\end{equation}
Finally, we choose $\ta (L)$ large enough so that $L \lambda_0^{-\ta (L)} \leqs C_0$, and
conclude that
\begin{equation*}
\left| \frac{\varphi_n(x)}{\varphi_n(y)} -1 \right| \leqs 
L^{*} d(x,y) 
\end{equation*}
for all $n \geqs \ta (L)$, where $L^{*} = 4 C_{0}$.
\end{proof}

Only the proof of Lemma~\ref{l:match_smooth} remains.

\begin{proof}[Proof of Lemma~\ref{l:match_smooth}]
The distortion of $\hat \varphi$ satisfies
\begin{equation*}
  \left| \frac{\hat \varphi(x)}{\hat \varphi(y)}-1 \right| = \left| \frac{\varphi(x)-\frac12
    \kappa}{\varphi(y)-\frac12 \kappa}-1 \right| =
\left| \frac{\frac{\varphi(x)}{\varphi(y)}-\frac{\tfrac{1}{2}
    \kappa}{\varphi(y)}}{1-\frac{\tfrac{1}{2} \kappa}{\varphi(y)}}-1 \right| = 
\frac{\left| \frac{\varphi(x)}{\varphi(y)}-1 \right|}{1-\frac{\tfrac{1}{2} \kappa}{\varphi(y)}}.
\end{equation*}
Since $\varphi\geqs \kappa$, the rightmost quantity above is $\leqs 2 \left|
  \frac{\varphi(x)}{\varphi(y)}-1 \right|$.  We conclude that $\hat \varphi \in {\mathcal
  D}_{2L^{*}}$ if $\varphi \in {\mathcal D}_{L^*}$.
\end{proof}

The proof of Theorem~\ref{mt:ue_maps} is now complete.


\section{Time-dependent $1D$ piecewise expanding maps }\label{s:pw_expand}

\subsection{Statement of results}\label{ss:results_pw}

We consider in this section piecewise $\mcal{C}^{2}$ expanding maps of the circle. More
precisely, we let ${\mcal S}^1$ be the interval $[0,1]$ with end points identified, and
say $f : \mcal{S}^{1} \to \mcal{S}^{1}$ is {\it piecewise $\mcal{C}^{2}$ expanding} if
there exists a finite partition $\mcal{A}_{1} = \mcal{A}_{1} (f)$ of $\mcal{S}^{1}$ into
intervals such that for every $I \in \mcal{A}_{1}$,
\begin{enumerate}
\item 
$f|I$ extends to a $\mcal{C}^{2}$ mapping in 
a neighborhood of $I$;
\item 
there exists $\lambda >1$ such that
$|f'(x)| \geqs \lambda$ for all $x \in I$.
\end{enumerate}
It simplifies the analysis slightly to assume $\lambda >2$, and we will do that (if $\la
\leqs 2$, we replace $f$ by a suitable power of $f$ and adjust the assumptions below
accordingly).

Unlike the case of expanding maps (with no discontinuities), compositions of piecewise
expanding maps do not necessarily have exponential loss of memory. Indeed, systems defined
by a single piecewise expanding map may not even be ergodic, and decay of correlations
(loss of memory) in that context is equivalent to mixing. Some additional conditions are
therefore needed for results along the lines of Theorem~\ref{mt:ue_maps}.  Let
$\mcal{A}_{n} \defas \bigvee_{i=1}^{n} f^{-(i-1)} \mcal{A}_{1}$ be the join of the
pullbacks of the partition $\mcal{A}_{1}$ and let $\mcal{A}_{n} | I$ be the restriction of
$\mcal{A}_{n}$ to the set $I$.  For $J \subset \mcal{S}^1$, let ${\rm int}(J)$ denote the
interior of $J$.

\begin{definition} 
  We say $f$ is {\it enveloping} if there exists $N \in {\mathbb Z}^+$ such that for every
  $I \in \mcal{A}_{1}$, we have
\begin{equation*}
\bigcup_{J \in \mcal{A}_N|I} f^N({\rm int}(J)) = \mcal{S}^1.
\end{equation*}
The smallest such $N$ is called the {\it enveloping time}.
\end{definition}

If the enveloping time of $f$ is $N$, then starting from any $I \in \mcal{A}_{1}$, $f^N|I$
{\it overcovers} $\mcal{S}^1$, in the sense that every $z \in \mcal{S}^1$ lies in $f^N(J)$
for some $J \in \mcal{A}_N|I$, and more than that: it is a positive distance from
$f^N(\partial J)$.  From here on, our universe $\mcal E$ is comprised of piecewise
$\mcal{C}^{2}$ expanding, enveloping maps.

For the same reason that many (individual) piecewise expanding maps are not mixing, one
cannot expect the arbitrary composition of piecewise expanding maps to produce exponential
loss of memory -- even when the constituent maps have good mixing properties: this is
because such properties do not necessarily manifest themselves in a single step. To
effectively leverage the mixing properties of individual maps, we may need a number of
consecutive $f_i$ to be near a single map.  We will formulate two sets of results: {\it a
  local result}, which assumes that all the $f_i$ are near a single piecewise expanding
map $g$, and {\it a global result}, which allows the $f_i$ to wander far and wide but
slowly.

\subsubsection{Local result}

Let $g \in \mcal E$ be fixed. We let $\Omega(g)=\{ x_{1} = x_{k+1}, x_{2}, \ldots, x_{k}
\} \subset \mcal{S}^1$ be the set of discontinuity points of $g$ labeled counterclockwise,
and let $d_\Omega(g) \defas \min_i |x_{i+1}-x_i|$.  For $\ve < \frac14 d_\Omega(g)$, we
say $f \in \mcal E$ is $\ve$-near $g$, written $f \in \mcal{U}_\ve (g)$, if the following
hold:
\begin{enumerate}[(1)]
\item 
$\Omega(f) = \{y_{1} = y_{k+1}, y_{2}, \ldots, y_{k} \}$
where $|y_i-x_i|< \ve$;
\item 
if $\xi_{fg}$ maps each interval $[x_i, x_{i+1}]$ 
affinely onto $[y_i,y_{i+1}]$, then  on each
$[x_i, x_{i+1}]$, 
\begin{equation*}
\|f\circ \xi_{fg}-g\|_{\mathcal{C}^2} < \ve\ .
\end{equation*}
\end{enumerate}

As in the case of single $1D$ piecewise expanding maps, a natural class of densities to
consider is
\begin{equation*}
\mcal{D} = \left\{ \vp \in \bv (\mcal{S}^{1}, \mbb{R}) : \vp \geqs 0, \; \: \int_{\mcal{S}^1}
\vp  (x) \, dx = 1 \right\}.
\end{equation*}

Recall the definitions of $F_n$ and $\mcal{P}_{F_n}$ from the end of Section~\ref{s:intro}
and the beginning of Section~\ref{s:uemaps}, respectively.

\begin{mtheorem}\label{mt:pw_local}
  Let $g \in \mcal{E}$.  There exist $\La < 1$ and $\ve > 0$ sufficiently small (depending
  on $g$) such that for all $f_i \in \mcal{U}_{\ve} (g)$ and $\vp, \psi \in \mcal{D}$,
  there exists $C_{(\vp, \psi)} > 0$ such that for all $n \in \mbb{Z}^{+}$, we have
\begin{equation}\label{e:m_locres_est}
\int_{\mcal{S}^{1}} \big|\mcal{P}_{F_n}(\varphi) - 
\mcal{P}_{F_n}(\psi)\big| \, dx \leqs
C_{(\vp, \psi)} \La^n .
\end{equation}
\end{mtheorem}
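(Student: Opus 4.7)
The plan is to follow the three-step architecture used to prove Theorem 2.1: identify an attracting cone of regular densities (playing the role of $\mathcal{D}_{L^{*}}$), show that densities in this cone acquire a uniform positive lower bound after finitely many iterations (the analogue of the constant $\kappa$), and then run the same coupling induction. The piecewise expanding setting forces one to replace Lipschitz regularity with bounded variation, which complicates the first two steps; the coupling step itself transfers almost verbatim.

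First I would establish a uniform Lasota--Yorke inequality
$$
\text{var}(\mathcal{P}_f \varphi) \leq \alpha \, \text{var}(\varphi) + C \|\varphi\|_1, \qquad \alpha < 1,
$$
valid for every $f \in \mathcal{U}_\varepsilon(g)$ with constants depending only on $g$ and $\varepsilon$. The hypothesis $\lambda > 2$ enters here so that $\alpha < 1$ is achieved in a single step, and the closeness of $f$ to $g$ in $\mathcal{U}_\varepsilon(g)$ keeps the geometric constants (number of branches, minimum branch length, $\mathcal{C}^2$ norms) bounded uniformly. Iterating the inequality and using $\|\mathcal{P}_f \varphi\|_1 = \|\varphi\|_1$ produces an attracting cone $\mathcal{D}^{*} = \{\varphi \in \mathcal{D} : \text{var}(\varphi) \leq V\}$ for a suitable $V$; any $\varphi \in \mathcal{D}$ enters $\mathcal{D}^{*}$ after a finite number of iterates, and that waiting period produces the prefactor $C_{(\varphi, \psi)}$ in (\ref{e:m_locres_est}).

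Next I would prove the key \emph{floor lemma}: there exist $N \in \mathbb{Z}^{+}$, a nondegenerate interval $J_0 \subset \mathcal{S}^{1}$, and $\kappa > 0$ such that $\mathcal{P}_{F_N}(\varphi)(x) \geq \kappa$ a.e.\ on $J_0$, uniformly for $\varphi \in \mathcal{D}^{*}$ and $(f_i) \subset \mathcal{U}_\varepsilon(g)$. The ingredients are: (i) the enveloping condition on $g$ at its enveloping time $N^{*}$, which holds with a strictly positive margin by which $g^{N^{*}}$ overcovers $\mathcal{S}^{1}$ on each element of $\mathcal{A}_1(g)$, and so by a perturbation argument transfers to any composition $F_{N^{*}} = f_{N^{*}} \circ \cdots \circ f_1$ of maps in $\mathcal{U}_\varepsilon(g)$ provided $\varepsilon$ is chosen small enough; (ii) a branchwise distortion estimate in the spirit of Lemma 2.5, applicable to piecewise $\mathcal{C}^{2}$ compositions on each monotone branch; and (iii) a lemma extracting from the BV bound on $\varphi$ a subinterval on which $\varphi$ is bounded below by a constant depending only on $V$. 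Combining these with an additional block of iterations, in order to diffuse the good subinterval across $\mathcal{S}^{1}$ via the covering, yields the floor.

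With the floor in hand, I would run the coupling induction exactly as in Section~\ref{s:uemaps}: subtract $\tfrac12 \kappa \mathbf{1}_{J_0}$ from each of $\mathcal{P}_{F_N} \varphi$ and $\mathcal{P}_{F_N} \psi$, renormalize, verify via a BV analogue of Lemma~\ref{l:match_smooth} that the renormalized densities remain in $\mathcal{D}^{**}$ for some slightly larger variation bound $V' > V$, and apply a further $N$ Lasota--Yorke contractions to return to $\mathcal{D}^{*}$ before repeating. Each block of $N$ steps matches a fraction $\tfrac12 \kappa \, |J_0| > 0$ of the remaining unmatched mass, producing the exponential rate $\Lambda = (1 - \tfrac12 \kappa \, |J_0|)^{1/N} < 1$. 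The principal obstacle is the floor lemma: in the smooth expanding setting the pointwise lower bound was automatic from the Lipschitz-ratio cone, whereas BV densities may vanish on intervals, so the lower bound must be synthesized from scratch by propagating the mixing supplied by the enveloping condition through compositions of distinct but nearby maps.
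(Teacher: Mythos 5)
Your proposal is correct and follows essentially the same three-step route as the paper: a uniform Lasota--Yorke inequality producing an attracting bounded-variation ball, a lower-bound ("floor") lemma obtained by extracting from the BV bound a short interval where the density exceeds $\tfrac12$, growing it under iteration, and propagating it via the robust overcovering in the enveloping condition through perturbed compositions, and finally the same coupling induction. The only cosmetic difference is that the paper's Lemmas~\ref{l:single_pos} and~\ref{l:variable_pos} yield the floor $\mathcal{P}_{F_{n_0}}(\varphi)\geqs\kappa$ on all of $\mathcal{S}^1$ rather than on a subinterval $J_0$, so a constant is matched and no boundary jumps enter the variation bookkeeping; your subinterval variant also closes the induction.
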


There exists an extensive literature on correlation decay for $1D$ piecewise expanding
maps in the contexts of a single map and random i.i.d.\@ compositions.  See,
{\textit{e.g.}},~\cite{BvYls1993, BvYls1994, HfKg1982, Lc1995}.

\subsubsection{Global result}

It is straightforward to verify that the collection of sets $\mcal{S} \defas
\{\mcal{U}_\ve(f) : f \in \mcal{E}, \; \: \ve < \frac{1}{4} d_\Omega(f)\}$ generates a
topology on $\mcal{E}$.\footnote{To prove $\mcal{S}$ forms the basis of a topology, it
  suffices to check that for $f_1, f_2 \in \mcal{E}$, $\ve_1, \ve_2>0$, and $g \in
  \mcal{U}_{\ve_1}(f_1) \cap \mcal{U}_{\ve_2}(f_2)$, there exists $\ve>0$ such that
  $\mcal{U}_{\ve}(g) \subset \mcal{U}_{\ve_1}(f_1) \cap \mcal{U}_{\ve_2}(f_2)$.}  Consider
now a continuous map $\ga : [a,b] \to \mcal E$ (see Figure~\ref{fi:curve}) and a finite or
infinite sequence of $f_i$ of the form $f_i=\gamma(t_i)$ where $a \leqs t_1 \leqs t_2
\leqs t_3 \leqs \cdots \leqs b$. Let $\Delta \defas \max_i (t_{i+1}-t_i)$. If we think of
the closed interval $[a,b]$ as time, then decreasing $\Delta$ corresponds to decreasing
the \textquoteleft velocity' at which the curve $\ga ([a,b])$ is traversed.

\begin{center}
\begin{figure}[ht]
\includegraphics[scale=0.75]{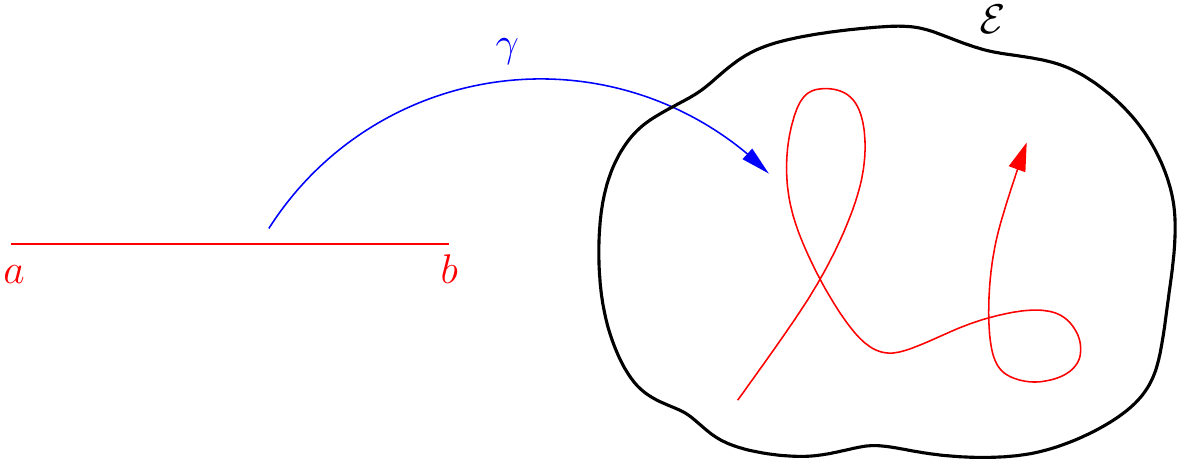}
\caption{The picture we envision is that of ``driving'' the system along a curve $\gamma$
  in $\mcal{E}$ and losing memory of past density distributions at {\it variable rates}
  depending on local characteristics.  That, we submit, is the true nature of {\itshape memory
    loss in dynamical systems with slowly varying parameters}.}
\label{fi:curve}
\end{figure}
\end{center}

\begin{mtheorem}\label{mt:pw_global}
  Let $\ga : [a,b] \to \mcal{E}$ be a continuous map.  Then there exist $\de_{0} > 0$ and
  $\La < 1$ (depending on $\gamma$) for which the following holds: For every $\{t_i\}$ as
  above with $\Delta \leqs \delta_0$ and $\vp, \psi \in \mcal{D}$, there exists $C_{(\vp,
    \psi)} > 0$ such that for all relevant $n$,
\begin{equation*}
\int_{\mcal{S}^{1}} |\mcal{P}_{F_n} (\vp) - 
\mcal{P}_{F_n}(\psi)| \, dx
\leqs C_{(\vp, \psi)} \La^n.
\end{equation*}
\end{mtheorem}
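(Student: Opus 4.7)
The plan is to reduce Theorem~\ref{mt:pw_global} to the local Theorem~\ref{mt:pw_local} by compactness. Since $\ga$ is continuous, $K \defas \ga([a,b])$ is a compact subset of $\mcal{E}$ in the topology generated by $\mcal{S}$. For every $g \in K$, Theorem~\ref{mt:pw_local} provides $\ve(g) > 0$ and $\Lambda(g) < 1$ such that the conclusion of that theorem holds for all sequences of maps in $\mcal{U}_{\ve(g)}(g)$. The open sets $\{\mcal{U}_{\ve(g)/2}(g) : g \in K\}$ cover $K$, so I extract a finite subcover $\{\mcal{U}_{\ve_j/2}(g_j) : j = 1, \ldots, J\}$, each with its own rate $\Lambda_j < 1$.

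From the proof of Theorem~\ref{mt:pw_local}, modeled on the coupling construction of Section~\ref{ss:ue_pf_detls}, I extract for each $j$ a positive integer $N_j$ and a constant $\beta_j \in (0,1)$ with the following feature: starting from any pair of densities in the relevant working class, applying the coupling scheme to $N_j$ consecutive maps drawn from $\mcal{U}_{\ve_j}(g_j)$ removes at least a fraction $\beta_j$ of the currently unmatched $L^{1}$-mass. Set $N_{*} = \max_j N_j$ and $\beta = \min_j \beta_j > 0$. The pullbacks $V_j \defas \ga^{-1}(\mcal{U}_{\ve_j/2}(g_j))$ form an open cover of the compact interval $[a,b]$; let $T > 0$ be a Lebesgue number for this cover, and set $\delta_0 \defas T / N_{*}$. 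For any admissible $\{t_i\}$ with $\Delta \leqs \delta_0$, any window of $N_{*}$ consecutive times spans at most $N_{*} \Delta \leqs T$ and therefore lies in some $V_j$, so the corresponding $f_i$'s all belong to $\mcal{U}_{\ve_j/2}(g_j) \subset \mcal{U}_{\ve_j}(g_j)$.

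I then partition the sequence of maps into consecutive blocks of length $N_{*}$. Within each block the local coupling scheme removes at least a fraction $\beta$ of the currently unmatched mass, so after $k$ blocks,
\begin{equation*}
\int_{\mcal{S}^{1}} |\mcal{P}_{F_n}(\vp) - \mcal{P}_{F_n}(\psi)| \, dx \leqs 2(1-\beta)^k \quad \text{whenever} \quad n \geqs k N_{*},
\end{equation*}
which gives the asserted bound with $\Lambda = (1-\beta)^{1/N_{*}}$. The prefactor $C_{(\vp, \psi)}$ absorbs the finite warm-up required to bring an arbitrary pair $\vp, \psi \in \mcal{D}$ into the working class.

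The main obstacle, and the step that demands care, is the concatenation of couplings across successive blocks, which in general are associated with different $g_j$'s. For a per-block fraction $\beta_j$ to be delivered independently of how many blocks have preceded, the unmatched densities inherited from one block must lie in a class on which the coupling of Theorem~\ref{mt:pw_local} acts with constants uniform across the finite family $\{g_j\}_{j=1}^{J}$. Establishing this requires (a) a Lasota--Yorke type inequality uniform over this finite cover so that the $\bv$ norms of the unmatched densities stay controlled, and (b) a renormalization after each block, in the spirit of~\eqref{renorm}, that keeps the unmatched measures in a uniformly controlled class. Finiteness of $J$ is what makes these uniform choices of $N_{*}$, $\beta$, and the working class possible, and it is precisely the compactness reduction of the first paragraph that delivers this finiteness.
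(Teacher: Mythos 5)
Your proposal is correct and follows essentially the same route as the paper: cover $\ga([a,b])$ by finitely many local neighborhoods from Theorem~\ref{mt:pw_local}, use finiteness to get uniform coupling constants, and choose $\de_0$ so that every coupling window stays inside a single neighborhood (the paper uses half-neighborhoods $V_{\frac12\alpha(z_j)}(z_j)$ and variable block lengths $n(g_j)$ where you use a Lebesgue number and a fixed block length $N_*$, a cosmetic difference). The uniformity issues you flag at the end are resolved in the paper exactly as you suggest: the compact curve lies in a single $\mcal{E}_0$ with a uniform Lasota--Yorke inequality, so a single $a^*$ works throughout and one takes $\max_j n(g_j)$ and $\min_j \kappa(g_j)$ over the finite cover.
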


\begin{remark}\label{rmk:variable}
  We have tried not to overburden the formulation of Theorem~\ref{mt:pw_global}, but as
  will be clear from the proofs, various generalizations are possible: The curve can be
  defined on an infinite interval and can traverse various subregions of $\mcal{E}$ with
  nonuniform derivative bounds, leading to variable rates of memory loss.  One does not,
  in fact, have to start with a prespecified curve and occasional long distance jumps can
  be accommodated.
\end{remark}

\begin{remark} Finally, we note that Theorem~\ref{mt:pw_global} -- together with its
  generalizations mentioned in Remark~\ref{rmk:variable} -- is a simplified version of the
  Lorentz gas example in Section~\ref{s:intro}, an important difference being the absence
  of the stable directions.
\end{remark}


\subsection{Proof of local result}\label{ss:pf_loc_res}

The following is an outline of the main steps of our proof:

\smallskip
\refstepcounter{trinity}
\label{step1}
\noindent
{\itshape Step~\ref{step1}.} As in the expanding case (in Section~\ref{s:uemaps}), we
represent the set of densities $\mcal{D}$ as $\mcal{D}= \bigcup_a \mcal{D}_a$ where the
conditions on $\mcal{D}_a$ are more relaxed as $a$ increases, and show that there is an
$a^*$ for which $\mcal{D}_{a^*}$ is an attracting set under $\mcal{P}_{F_n}$ for any
sequence of $f_i$ in a subset of $\mcal E$ with uniform bounds.  The time it takes to
enter $\mcal{D}_{a^*}$ from each $\mcal{D}_a$ is shown to be bounded.

\smallskip
\refstepcounter{trinity}
\label{step2}
\noindent
{\itshape Step~\ref{step2}.} Unlike the expanding case, where all functions in this
attracting set are uniformly bounded away from $0$, and coupling (or matching of
densities) can be done immediately, we do not have such a bound here. Instead, we
guarantee the matching of a fixed fraction of the measures {\it a finite number of steps
  later} using the enveloping property of $g$.

\smallskip
\refstepcounter{trinity}
\label{step3}
\noindent
{\itshape Step~\ref{step3}.} To complete the cycle, we must show that after subtracting
off the amount matched and renormalizing as in Section~\ref{s:uemaps}, functions in
$\mcal{D}_{a^*}$ are in $\mcal{D}_a$ for some bounded $a$.

\smallskip

We now carry out these steps in detail.

\medskip
\noindent {\bfseries Step~\ref{step1}.} For $\varphi \in {\rm BV}
(\mcal{S}^1, {\mathbb R})$, we let $\bigvee_0^1 \varphi$
denote the total variation of $\varphi$, and let
\begin{equation*}
\mcal{D}_{a} \defas \left\{ \vp \in \bv (\mcal{S}^{1}, \mbb{R}) : \vp \geqs 0, \; \: \int \vp = 1, \; \: 
\bigvee_{0}^{1} \vp \leqs a \right\}.
\end{equation*}
Clearly, $\mcal{D}= \bigcup_a \mcal{D}_a$. Let
\begin{equation*}
\lambda (f) \defas 
\min_{I \in \mcal{A}_{1} (f)} \inf_{x \in I} |f'(x)|\ ,
\end{equation*}
and recall the following well-known inequality originally due to Lasota and Yorke.

\begin{lemma}[\textbf{Lasota-Yorke inequality}~\cite{LaYj1973}]\label{l:lyi}
  Let $f$ be a piecewise $\mcal{C}^{2}$ expanding map.  For $\vp \in \bv (\mcal{S}^{1},
  \mbb{R})$, we have
\begin{equation}\label{e:lyi}
\bigvee_{0}^{1} \mcal{P}_f (\vp) \leqs 2 \la (f)^{-1} 
\bigvee_{0}^{1} \vp + A(f) \| \vp
\|_{L^{1}}
\end{equation}
where
\begin{equation*}
A(f) = \sup_{z \in \mcal{S}^{1}} \frac{|f''(z)|}{|f'(z)|^{2}} + 2 \sup_{I \in \mcal{A}_{1}
  (f)} \frac{\sup_{z \in I} |f'(z)|^{-1}}{|I|}.
\end{equation*}
\end{lemma}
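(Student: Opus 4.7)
The plan is the classical Lasota--Yorke argument: decompose $\mcal{P}_f\vp$ branch-by-branch, use the product rule for BV functions on each branch, and absorb the troublesome $\sup \vp$ terms via the standard bound $\sup_I \vp \leqs \inf_I \vp + \bigvee_I \vp$.

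First, write the transfer operator explicitly. If $\mcal{A}_1(f) = \{I_1,\dots,I_k\}$ and $g_i \defas (f|I_i)^{-1}$, then
\begin{equation*}
\mcal{P}_f\vp(x) = \sum_{i=1}^k h_i(x), \qquad
h_i(x) \defas \frac{\vp(g_i(x))}{|f'(g_i(x))|}\,\mathbf{1}_{f(I_i)}(x).
\end{equation*}
By subadditivity of total variation on $\mcal{S}^1$, $\bigvee_0^1 \mcal{P}_f\vp \leqs \sum_i \bigvee_0^1 h_i$. Each $h_i$ is obtained by extending a function on $f(I_i)$ by zero, so the variation splits into \emph{interior variation} and at most two \emph{boundary jumps} from the endpoints of $f(I_i)$:
\begin{equation*}
\bigvee_0^1 h_i \;\leqs\; \bigvee_{I_i}\!\big(\vp/|f'|\big) \;+\; 2\sup_{I_i}\big(\vp/|f'|\big),
\end{equation*}
where I have used the monotone change of variables $y = g_i(x)$ to rewrite the interior variation back on $I_i$.

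Next, apply the product-rule inequality $\bigvee_I(\vp\cdot g) \leqs \sup_I|g|\,\bigvee_I\vp + \sup_I|\vp|\,\bigvee_I g$ with $g = 1/|f'|$. Since $|f'|\geqs\la(f)$ and $(1/|f'|)' = -\mathrm{sign}(f')\,f''/|f'|^2$ on each $I_i$, one gets
\begin{equation*}
\bigvee_{I_i}\!\big(\vp/|f'|\big) \;\leqs\; \la(f)^{-1}\bigvee_{I_i}\vp \;+\; \sup_{I_i}\vp \cdot \int_{I_i}\!\frac{|f''|}{|f'|^2}\,dx.
\end{equation*}
For the boundary-jump contribution, bound $2\sup_{I_i}(\vp/|f'|) \leqs 2\sup_{I_i}(1/|f'|)\cdot\sup_{I_i}\vp$. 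In both remaining $\sup_{I_i}\vp$ terms, apply the key inequality
\begin{equation*}
\sup_{I_i}\vp \;\leqs\; \inf_{I_i}\vp + \bigvee_{I_i}\vp \;\leqs\; \frac{1}{|I_i|}\int_{I_i}\vp\,dx \;+\; \bigvee_{I_i}\vp.
\end{equation*}

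Finally, sum over $i$. The $\bigvee_{I_i}\vp$ pieces telescope to $\bigvee_0^1\vp$ (times a constant proportional to $\la(f)^{-1}$), while the $\int_{I_i}\vp$ pieces combine into $\|\vp\|_{L^1}$ weighted by the two quantities appearing in $A(f)$: the term $\sup_z |f''|/|f'|^2$ comes out of the interior estimate, and the term $2\sup_I (\sup_I|f'|^{-1})/|I|$ comes out of the boundary-jump estimate. Collecting everything yields \eqref{e:lyi}. The only real bookkeeping issue is tracking how the boundary jumps at shared endpoints of neighbouring branches combine so that the total coefficient on $\bigvee_0^1\vp$ contracts to $2\la(f)^{-1}$; this is the main delicate point, and it is what makes the assumption $\la>2$ (mentioned above) convenient for later applications, since then $2\la^{-1}<1$ and the inequality can be iterated.
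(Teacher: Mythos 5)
The paper does not actually prove this lemma (it is quoted from Lasota--Yorke with a citation), so the question is whether your sketch, on its own, delivers the stated inequality. Your strategy --- branch decomposition, interior variation plus two boundary jumps, and ``$\sup\leqs$ average $+$ variation'' --- is indeed the classical one, but the bookkeeping as written does not produce the coefficient $2\la(f)^{-1}$, and that constant is the entire point: the paper iterates \eqref{e:lyi} under the standing assumption $\la_0>2$, which only helps if the coefficient really is $2\la^{-1}$. Two steps each leak an extra copy of $\bigvee_{I_i}\vp$. First, the crude product rule $\bigvee_{I_i}(\vp\sigma)\leqs \sup_{I_i}\sigma\,\bigvee_{I_i}\vp+\sup_{I_i}|\vp|\,\bigvee_{I_i}\sigma$ with $\sigma=1/|f'|$, followed by $\sup_{I_i}|\vp|\leqs |I_i|^{-1}\int_{I_i}|\vp|+\bigvee_{I_i}\vp$, produces an uncontrolled leftover $\bigvee_{I_i}\sigma\cdot\bigvee_{I_i}\vp$; the correct move is the Leibniz-type estimate $\bigvee_{I_i}(\vp\sigma)\leqs \sup_{I_i}\sigma\,\bigvee_{I_i}\vp+\int_{I_i}|\vp|\,|\sigma'|\,dx$, which sends the second term directly into $\sup_z\big(|f''|/|f'|^2\big)\cdot\|\vp\|_{L^1}$ with no variation left over. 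Second, bounding the jumps by $2\sup_{I_i}(\vp\sigma)$ and then applying ``$\sup\leqs$ average $+$ variation'' costs $2\la^{-1}\bigvee_{I_i}\vp$ from the boundary alone; the two jumps must be handled jointly: for every $x\in I_i=[a_i,b_i]$ one has $|\vp(a_i)|+|\vp(b_i)|\leqs 2|\vp(x)|+\bigvee_{I_i}\vp$ (a \emph{single} variation, since the oscillations on $[a_i,x]$ and $[x,b_i]$ add up to at most $\bigvee_{I_i}\vp$), and averaging over $x\in I_i$ gives $|\vp(a_i)|+|\vp(b_i)|\leqs 2|I_i|^{-1}\int_{I_i}|\vp|+\bigvee_{I_i}\vp$, hence a boundary contribution of $\la^{-1}\bigvee_{I_i}\vp+2|I_i|^{-1}\sup_{I_i}|f'|^{-1}\int_{I_i}|\vp|$.

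With these two corrections the coefficient on $\bigvee_{I_i}\vp$ is exactly $\la^{-1}+\la^{-1}=2\la^{-1}$, and summing over branches yields precisely the stated $A(f)$. Your version gives roughly $3\la^{-1}$ plus a term proportional to $\sup|f''|/|f'|^2$, which need not be a contraction for \emph{any} $\la$. Your closing diagnosis also points at the wrong place: the issue is not how jumps at shared endpoints of neighbouring branches combine (the images $f(I_i)$ of distinct branches are estimated independently), but the two per-branch refinements above. A cosmetic point: since $\vp$ is only assumed to be in $\bv(\mcal{S}^1,\mbb{R})$, run the sup/average bounds with $|\vp|$ throughout, using $\bigvee_{I_i}|\vp|\leqs\bigvee_{I_i}\vp$.
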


We now fix $\mcal{E}_0 \subset \mcal E$ with uniform $\mcal{C}^{2}$ bounds and with $g$
well inside $\mcal{E}_0$.  Let 
\begin{equation*}
\lambda_0 \defas \inf_{f \in \mcal{E}_0} 
\lambda(f)
\quad \text{and} \quad A_0 \defas
\sup_{f \in \mcal{E}_0} A(f).
\end{equation*}
We assume $\la_{0} > 2$.  Upon repeated applications of~\eqref{e:lyi}, for $f_i \in
\mcal{E}_0$ and $\varphi\in\mathcal{D}$ we obtain
\begin{equation}
\label{e:genlyi}
\bigvee_{0}^{1} \mcal{P}_{F_n}(\vp) 
\leqs  
(2 \la_0^{-1})^{n} \bigvee_{0}^{1} \vp + 
\frac{A_0}{1 - 2 \la_0^{-1}} ,
\end{equation}
which is the analog of the distortion estimate~\eqref{e:ly-expanding} in Section~\ref{s:uemaps}.

Our main result in Step~\ref{step1} is

\begin{proposition}\label{p:pw_absorb}
  Fix any $a^* > \frac{A_0}{1- 2\lambda_0^{-1}}$. Then for every $a>0$, there exists
  $\tau(a) \in {\mathbb Z}^+$ such that for all $f_i \in \mcal{E}_0$, $\vp \in \mcal{D}_a$
  and $n \geqs \tau(a)$, $\mcal{P}_{F_n}(\vp) \in \mcal{D}_{a^*}$.
\end{proposition}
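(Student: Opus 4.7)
The plan is to deduce this proposition as an essentially immediate consequence of the iterated Lasota--Yorke inequality~\eqref{e:genlyi}, which has already been established. The key observation is that the right-hand side of~\eqref{e:genlyi} has two terms: a transient term $(2\la_0^{-1})^n \bigvee_0^1 \vp$ which decays geometrically to $0$, and a steady-state term $A_0/(1 - 2 \la_0^{-1})$ that is independent of $n$ and of the initial density. Since by hypothesis $a^* > A_0/(1 - 2 \la_0^{-1})$, there is a positive gap $\de \defas a^* - A_0/(1-2\la_0^{-1}) > 0$ into which the transient term can eventually be absorbed.

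Concretely, here is how I would carry out the argument. First I would record the standard facts that $\mcal{P}_{f}$ preserves the $L^{1}$ norm of nonnegative functions and preserves positivity; by induction these persist under compositions, so $\mcal{P}_{F_n}(\vp) \geqs 0$ and $\int \mcal{P}_{F_n}(\vp) \, dx = 1$ for every $n$ and every $\vp \in \mcal{D}$. Hence the only condition from the definition of $\mcal{D}_{a^*}$ that requires work is the total variation bound $\bigvee_{0}^{1} \mcal{P}_{F_n}(\vp) \leqs a^*$. Next, given $\vp \in \mcal{D}_a$, I apply~\eqref{e:genlyi} to obtain
\begin{equation*}
\bigvee_{0}^{1} \mcal{P}_{F_n}(\vp) \leqs (2\la_0^{-1})^{n} \, a + \frac{A_0}{1 - 2 \la_0^{-1}} = (2\la_0^{-1})^{n} \, a + a^* - \de.
\end{equation*}
It therefore suffices to choose $n$ large enough that $(2\la_0^{-1})^{n} \, a \leqs \de$; since $\la_0 > 2$, this is achieved for every
\begin{equation*}
n \geqs \tau(a) \defas \max\!\left\{ 0, \; \left\lceil \frac{\log(a/\de)}{\log(\la_0 / 2)} \right\rceil \right\},
\end{equation*}
and the bound $\tau(a)$ depends only on $a$, $\la_0$, $A_0$, and the fixed choice of $a^*$, but not on the particular sequence $\{f_i\} \subset \mcal{E}_0$.

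There is no serious obstacle: the proof is a one-liner modulo the preparatory lemma~\ref{l:lyi} already proved and the uniform constants $\la_0, A_0$ already extracted from the fixed family $\mcal{E}_0$. The only point deserving a moment's thought is that $\tau(a)$ is genuinely uniform over all admissible sequences, which is automatic because \eqref{e:genlyi} was established with $\la_0$ and $A_0$ in place of the pointwise quantities $\la(f_i)$ and $A(f_i)$. This uniformity is what will allow the absorption statement to be used as a building block in Steps~\ref{step2} and~\ref{step3} of the larger argument.
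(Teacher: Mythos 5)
Your proof is correct and follows essentially the same route as the paper: both deduce the result directly from the iterated Lasota--Yorke inequality~\eqref{e:genlyi}, choosing $\tau(a)$ just large enough that the geometrically decaying term $(2\la_0^{-1})^n a$ fits inside the gap $a^* - A_0/(1-2\la_0^{-1})$, and your explicit formula for $\tau(a)$ agrees with the paper's. Your added remarks on $L^1$-norm and positivity preservation make explicit what the paper leaves implicit, but the argument is the same.
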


\begin{proof} 
  This is an immediate consequence of~\eqref{e:genlyi}.  In fact, it is enough to choose
\begin{equation}\label{e:absorb_est}
  \tau(a) \geqs   
   \ln \left( \left( a^* - \frac{A_0}
        {1 - 2 \la_0^{-1}} \right)a^{-1} \right) \bigg/  \ln (2 \la_0^{-1})
\end{equation}
among nonnegative integers.
\end{proof}

\noindent {\bfseries Step~\ref{step2}.} The second step is perturbative. We will first
work with iterates of $g$ before extending our results to $f_{i}$ in some suitable
$\mcal{U}_\ve(g)$.

\begin{lemma}\label{l:single_pos}
  There exist $n_0 \in {\mathbb Z}^+$ and $\kappa_0>0$ (depending on $g$) such that for
  all $\vp \in \mcal{D}_{a^*}$, $\mcal{P}_{g^{n_0}}(\varphi) \geqs \kappa_0$.
\end{lemma}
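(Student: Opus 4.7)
The proof I envision proceeds in three steps: a pigeonhole argument that extracts a positive lower bound on some subinterval, a geometric growth step exploiting $\la_{0} > 2$, and a final application of the enveloping hypothesis. Let $\Ga$ denote an upper bound on $|g'|$ and write $\mcal{A}_{1} = \mcal{A}_{1}(g)$.

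First I would show that every $\vp \in \mcal{D}_{a^{*}}$ admits an interval $J^{*} \subset \mcal{S}^{1}$ with $|J^{*}| \geqs c_{1}(a^{*})$ on which $\vp \geqs c_{2}(a^{*})$, for positive constants $c_{1}, c_{2}$ depending only on $a^{*}$. Partition $\mcal{S}^{1}$ into $m = \lceil 4 a^{*} \rceil$ equal subintervals $J_{i}$ of length $L = 1/m$, and write $\mu_{i} = \int_{J_{i}} \vp$ and $v_{i} = \bigvee_{J_{i}} \vp$, so that $\sum \mu_{i} = 1$ and $\sum v_{i} \leqs a^{*}$. This choice of $m$ ensures $\sum_{i} (\mu_{i} - 2 L v_{i}) \geqs \tfrac{1}{2}$, so pigeonhole delivers $i^{*}$ with $\mu_{i^{*}}/L - 2 v_{i^{*}} \geqs \tfrac{1}{2}$. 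The elementary inequality $\min_{J_{i^{*}}} \vp \geqs \max_{J_{i^{*}}} \vp - v_{i^{*}} \geqs \mu_{i^{*}}/L - v_{i^{*}}$ then yields $\vp \geqs \tfrac{1}{2}$ on $J_{i^{*}}$, a subinterval of length $L$ of order $1/a^{*}$.

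Next I would iterate $g$ to spread this lower bound until it covers a full element of $\mcal{A}_{1}$. Set $J^{*}_{0} := J^{*}$ and $c_{0} := c_{2}(a^{*})$, and inductively define $J^{*}_{n+1}$ to be the $g$-image of the longest intersection of $J^{*}_{n}$ with an element of $\mcal{A}_{1}$; the Perron--Frobenius formula yields $\mcal{P}_{g^{n+1}}(\vp) \geqs c_{n+1} := c_{n}/\Ga$ on the connected interval $J^{*}_{n+1}$. The driving geometric observation is that if a connected arc $J \subset \mcal{S}^{1}$ meets three or more cyclically consecutive elements of $\mcal{A}_{1}$, then it contains the middle one entirely. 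Consequently, as long as $J^{*}_{n}$ fails to contain any full element of $\mcal{A}_{1}$, it lies in at most two adjacent elements, its longest intersection has length $\geqs |J^{*}_{n}|/2$, and expansion gives $|J^{*}_{n+1}| \geqs (\la_{0}/2) |J^{*}_{n}|$. Since $\la_{0} > 2$, the lengths grow by a uniform factor at every step, so after at most $n_{1} = O(\log(1/c_{1}))$ iterations we must find $J^{*}_{n_{1}}$ containing a full element $I^{*} \in \mcal{A}_{1}$. This is where the hypothesis $\la_{0} > 2$ is crucial, and I expect it to be the main obstacle of the proof.

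Finally I would invoke the enveloping hypothesis with $N$ the enveloping time of $g$: by definition, for every $z \in \mcal{S}^{1}$ there exists $J \in \mcal{A}_{N}|I^{*}$ with $z \in g^{N}({\rm int}(J))$, yielding a preimage $y \in I^{*}$ of $z$ under $g^{N}$ with $|(g^{N})'(y)| \leqs \Ga^{N}$. Retaining only this one term in the transfer-operator sum gives
\begin{equation*}
\mcal{P}_{g^{n_{1} + N}}(\vp)(z) \geqs \mcal{P}_{g^{n_{1}}}(\vp)(y)/|(g^{N})'(y)| \geqs c_{n_{1}}/\Ga^{N}
\end{equation*}
for every $z \in \mcal{S}^{1}$, and we conclude by taking $n_{0} := n_{1} + N$ and $\kappa_{0} := c_{n_{1}}/\Ga^{N}$.
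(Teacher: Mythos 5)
Your proof is correct and follows essentially the same route as the paper's: a variation/pigeonhole argument producing an interval of definite length (depending only on $a^*$) on which $\vp \geqs \frac12$, geometric growth of that interval under iteration of $g$ using $\la_0 > 2$ until a full element of $\mcal{A}_1(g)$ is covered, and then the enveloping property to spread the lower bound over all of $\mcal{S}^1$; the paper differs only cosmetically, pigeonholing over the dynamical partition $\mcal{A}_{n_1}(g)$ rather than equal-length intervals and tracking nested preimage subintervals $J = J_1 \supset J_2 \supset \cdots$ rather than growing images. The one point to tighten is the uniformity of the stopping time: your growth argument shows that a full element is covered at some first time $n'(\vp) \leqs n_1$, not that $J^*_{n_1}$ itself covers one, so the enveloping step gives $\mcal{P}_{g^{n'+N}}(\vp) \geqs \frac12 \Ga^{-(n'+N)}$ everywhere with $n'$ depending on $\vp$; as in the paper, one repairs this by noting that every enveloping map is onto, so pushing forward the remaining $n_1 - n'$ steps preserves an everywhere lower bound of the form $\frac12 \Ga^{-(n_1+N)}$, which is then the uniform $\kappa_0$.
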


\begin{proof} 
  Let $\mcal{A}_{1}$ be the partition for $g$, and let $n_1$ be such that all elements of
  $\mcal{A}_{n_1}$ have length $<\frac{1}{2a^*}$.  We will show that for every $\vp \in
  \mcal{D}_{a^*}$ there exists $J =J(\varphi) \in \mcal{A}_{n_1}$ such that $\vp | J \geqs
  \frac12$. Suppose, to derive a contradiction, that for each $J \in \mcal{A}_{n_1}$,
  there exists $z_J \in J$ with $\vp(z_J)<\frac12$. Then
\begin{equation*}
\int_J \vp \leqs |J| \left(\vp(z_J)+\bigvee_J \vp 
\right) < \frac{|J|}{2} +
\frac{1}{2a^*} \bigvee_J \vp.
\end{equation*}
Summing over $J$, this gives $\int_{\mcal{S}^1} \vp < \frac{1}{2} + \frac{1}{2} = 1$.

Next we claim that for every $J \in \mcal{A}_{n_{1}} (g)$, there exists $s=s(J)$ and a
subinterval $J_{s} \subset J$ such that $g^{s} | J_{s}$ is $\mcal{C}^{2}$ and $g^{s}
(J_{s}) \supset I$ for some $I \in \mcal{A}_{1} (g)$.  To prove this, we inductively
define a nested sequence of intervals $J = J_{1} \supset J_{2} \supset J_{3} \supset
\cdots$ as follows.  Assume that $J_{k}$ has been defined.  If $g^{k} (J_{k}) \supset I$
for some $I \in \mcal{A}_{1} (g)$, set $s = k$.  If not, then either $g^{k} (J_{k})
\subset I$ for some $I \in \mcal{A}_{1} (g)$ or $g^{k} (J_{k})$ intersects $2$ elements of
$\mcal{A}_{1} (g)$.  In the former case, set $J_{k+1} = J_{k}$, and in the latter, let
$J_{k+1}$ be the longer of the $2$ intervals in $\mcal{A}_{1} (g) | g^{k} (J_{k})$.  This
process must terminate in a finite number of steps because $\inf |g'| > 2$.

Let $n_0 \defas s_0+N$ where $s_0=\max\{s(J): J \in \mcal{A}_{n_1}\}$ and $N=N(g)$ is the
enveloping time for $g$.  We now produce the $\kappa_0$ with the asserted property in the
lemma.  Fix arbitrary $\vp \in \mcal{D}_{a^*}$. Let $J=J(\vp) \in \mcal{A}_{n_1}$ be such
that $\vp | J \geqs \frac12$, and let $I \in \mcal{A}_{1}$ be such that $g^{s(J)}(J_{s})
\supset I$.  Then $\mcal{P}_{g^{s(J)}}(\vp)| I \geqs \frac{1}{2} M_{0}^{-s(J)}$ where
$M_{0} (g) \defas \sup |g'|$.  From $g^{N}(I)=\mcal{S}^1$, it follows that
$\mcal{P}_{g^{s(J)+N}}(\vp) \geqs \frac{1}{2} M_{0}^{-(s(J)+N)}$ on $\mcal{S}^1$. We still
have some steps to go if $s(J)<s_0$, but $g$ is onto (as all enveloping maps are
necessarily onto), and even in the worst-case scenario, we still have
$\mcal{P}_{g^{n_0}}(\vp) \geqs \frac{1}{2} M_{0}^{-n_0} \defas \kappa_0$ everywhere on
$\mcal{S}^1$.
\end{proof}

Define
\begin{equation*}
\mcal{A} (F_n) \defas \bigvee_{i=1}^{n} (F_{i-1})^{-1} \mcal{A}_{1} (f_{i})
\end{equation*}
where $F_{0}$ is the identity map.  Now let $f_i \in \mcal{U}_{\ve}(g)$.  From the
one-to-one correspondence between elements of $\mcal{A}_{1} (f_{i})$ and $\mcal{A}_{1}
(g)$, one deduces that provided $\ve$ is sufficiently small, there is a well-defined
mapping $\Phi_{n} : \mcal{A}_{n} (g) \to \mcal{A}(F_n)$ where for $J \in \mcal{A}_{n} (g)$,
$\Phi_{n} (J) \in \mcal{A}(F_n)$ has the same itinerary as $J$. (In general, $\Phi_{n}$ need not be
onto.)  For $J=(a,b)$, let $J_\delta \defas (a+\delta, b-\delta)$.

\begin{lemma}\label{l:variable_pos} 
  Let $n_0$ be as in Lemma~\ref{l:single_pos}.  Then there exist $\ve>0$ with
  $\mcal{U}_\ve(g) \subset \mcal{E}_0$ and $\kappa>0$ such that for all $f_i \in
  \mcal{U}_\ve(g)$, $\mcal{P}_{F_{n_0}}(\vp) \geqs \kappa$ for all $\vp \in \mcal{D}_{a^*}$.
\end{lemma}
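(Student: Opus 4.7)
\smallskip
\noindent\textbf{Proof proposal for Lemma~\ref{l:variable_pos}.}
The plan is to imitate the proof of Lemma~\ref{l:single_pos} while tracking the effect of replacing iterates of $g$ by the composition $F_{n_0}$, using that $n_0 = s_0 + N$ is a fixed integer depending only on $g$. The first step of that proof --- locating $J \in \mcal{A}_{n_1}(g)$ on which $\vp | J \geqs \frac{1}{2}$ --- rests purely on $\bigvee_{0}^{1} \vp \leqs a^{*}$ and on the fixed partition $\mcal{A}_{n_1}(g)$, so it carries over verbatim.

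For the second step, fix the selected $J \in \mcal{A}_{n_1}(g)$ and consider its counterpart $\tilde J = \Phi_{n_1}(J) \in \mcal{A}(F_{n_1})$ having the same $\mcal{A}_{1}$-itinerary. Since the maps $f_i$ all lie in $\mcal{U}_{\ve}(g)$ with uniform $\mcal{C}^{2}$ bounds on $\mcal{E}_0$ and the number of iterates considered is at most $n_0$, a standard perturbation argument shows that by choosing $\ve$ small we may force the endpoints of $\tilde J$ and, more generally, of $F_k(\tilde J)$, to lie within any prescribed $\de > 0$ of those of $J$ and $g^{k}(J)$ respectively, for every $k \leqs n_0$. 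Repeat the nested-interval construction from the proof of Lemma~\ref{l:single_pos}, but using $F_{k,1}$ in place of $g^k$, to produce $\tilde J = \tilde J_1 \supset \tilde J_2 \supset \cdots \supset \tilde J_s$. For $\ve$ small enough the process terminates at the same index $s = s(J) \leqs s_0$ as in the unperturbed case and yields an interval $\tilde I$ with $F_{s}(\tilde J_{s}) \supset \tilde I$, where $\tilde I$ differs from the corresponding $I \in \mcal{A}_{1}(g)$ by at most $\de$ at each endpoint, and with $\tilde J_s$ still contained in $\tilde J \cap J_\de$ so that $\vp | \tilde J_s \geqs \frac{1}{2}$.

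Now we leverage the enveloping property of $g$. Since the finite cover $\bigcup_{K \in \mcal{A}_{N}(g)|I} g^{N}(\operatorname{int}(K)) = \mcal{S}^{1}$ has a uniform overlap margin $\eta > 0$ (distance of each $g^{N}(\partial K)$ from the regions covered by the other branches), another application of the same perturbation estimate gives, for $\ve$ small, $F_{s+N,s+1}(\tilde I) = \mcal{S}^{1}$: the perturbed pushforwards of the corresponding sub-intervals of $\tilde I$ still jointly cover the circle. A single-branch contribution bound analogous to the one in Lemma~\ref{l:single_pos} then yields $\mcal{P}_{F_s}(\vp) | \tilde I \geqs \tfrac{1}{2} M_{0}^{-s}$, and pushing once more under $F_{s+N,s+1}$ gives $\mcal{P}_{F_{s+N}}(\vp) \geqs c_{0} M_{0}^{-(s+N)}$ everywhere on $\mcal{S}^{1}$ for some $c_0 > 0$ coming from the Jacobian of $F_{s+N,s+1}$ on the covering branches. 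The remaining $s_0 - s$ steps (if any) up to time $n_0 = s_{0} + N$ only shrink this bound by a further bounded factor, since each $f_{i}$ is onto (enveloping maps are onto). Choosing $\kappa$ to be the worst of these finitely many lower bounds gives the claim.

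The main obstacle is purely technical: verifying that a single $\ve > 0$ works simultaneously for all the perturbative closeness statements --- endpoint-closeness of $\tilde J$ to $J$ and of $F_{k}(\tilde J_k)$ to $g^k(J_k)$ for $k \leqs n_0$, and surjectivity of $F_{s+N,s+1}$ on the perturbed images --- uniformly over $\{f_{i}\} \subset \mcal{U}_{\ve}(g)$ and over the finitely many choices of $J \in \mcal{A}_{n_1}(g)$. Because $n_0$, $n_1$, $s_0$, $N$, $\#\mcal{A}_{n_1}(g)$, and the $\mcal{C}^2$-bounds on $\mcal{E}_{0}$ are all finite and fixed once $g$ is given, only finitely many closeness inequalities need to be enforced, so a sufficiently small $\ve > 0$ exists.
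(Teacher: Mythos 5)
Your proposal is correct and follows essentially the same route as the paper: locate $J$ with $\vp|J \geqs \tfrac12$ exactly as in Lemma~\ref{l:single_pos}, transfer the covering statements from $g$ to $F_n$ via the itinerary correspondence and a slight shrinking of intervals (the paper works with $J''=\Phi_{n_1}(J)\cap J$ and $I_\de$ rather than re-running the nested-interval construction for $F_k$, but this is cosmetic), exploit the overlap margin in the enveloping property to get $F_N(I_\de)=\mcal{S}^1$, and use surjectivity for the leftover steps up to $n_0$, with a single $\ve$ sufficing because only finitely many closeness conditions arise. No gaps worth flagging.
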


\begin{proof}[Proof of Lemma~\ref{l:variable_pos}]
  Let $\vp \in \mcal{D}_{a^{*}}$ be fixed.  In the argument below, $\ve > 0$ and $\de > 0$
  will be taken to be as small as is needed ($\ve$ and $\de$ depend on $g$ and on $a^{*}$
  but not on $\vp$).  We let $n_{1}$, $J = J(\vp) \in \mcal{A}_{n_{1}} (g)$, $s(J) \in
  \mbb{Z}^{+}$, and $I \in \mcal{A}_{1} (g)$ be as in the proof of
  Lemma~\ref{l:single_pos}.  In particular, $\ve$ is small enough (depending on $g$ and
  $n_{1}$) so that $\Phi_{n} : \mcal{A}_{n} (g) \to \mcal{A} (F_{n})$ is well defined for
  all $f_{i} \in \mcal{U}_{\ve} (g)$ and the following $2$ values of $n$: $n=n_{1}$ and
  $n=N$, where $N=N(g)$ is the enveloping time for $g$.

  We claim that for every $I \in \mcal{A}_{1} (g)$ and $f_{i} \in \mcal{U}_{\ve} (g)$, we
  may assume that $F_{N} (I_{\de}) = \mcal{S}^{1}$.  For each $I' \in \mcal{A}_{N} (g) |
  I$, $g^{N} (I')$ and $F_{N} (\Phi_{N} (I'))$ can be made arbitrarily close.  This
  conclusion remains true if we shrink $I'$ by a small amount, \textit{i.e.}, $\de$ (we
  need only do this for the leftmost and rightmost $I' \in \mcal{A}_{N} (g) | I$).  The
  assertion follows from this and the enveloping property of $g$.

  Now let $f_{i} \in \mcal{U}_{\ve} (g)$ be fixed, and let $J' = \Phi_{n_{1}}
  (J)$. Assuming $\ve$ is chosen sufficiently small, $J'' = J' \cap J \neq \emptyset$, and
  $F_{s(J)} (J'') \supset I_{\de}$ where $\de$ is as in the previous paragraph.  Thus
  $F_{s(J)+N} (J'') = \mcal{S}^{1}$, and since $F_{n_{0}, s(J)+N+1}$ is onto, it follows
  that $F_{n_{0}} (J'') = \mcal{S}^{1}$. Noting that $\vp | J'' \geqs \frac{1}{2}$, we
  conclude that
\begin{equation*}
\mcal{P}_{F_{n_{0}}} (\vp) \geqs \frac{1}{2} (M_{0} + \ve)^{-n_{0}} \defas \ka.
\end{equation*}
\end{proof}

\noindent {\bfseries Step~\ref{step3}.} The matching process introduces, for $\vp \in
\mcal{D}_{a^*}$ with $\vp \geqs \kappa$, a new density $$\hat \vp \defas \frac{\vp-
  \kappa}{1- \kappa}.$$ (We may subtract off any amount $\leqs \kappa$, the only requirement
being that $\hat \vp$ remains $\geqs 0$.)  Since subtracting a constant does not diminish
variation, and magnifying it by a constant $c$ magnifies the variation by at most $c$, it
follows that $\hat \vp \in \mcal{D}_{a^*(1-\kappa)^{-1}}$.

\begin{proof}[Proof of Theorem~\ref{mt:pw_local}]
  We first iterate $\vp$ and $\psi$ until $\mcal{P}_{F_{n}} (\vp) \in \mcal{D}_{a^{*}}$
  and $\mcal{P}_{F_{n}} (\psi) \in \mcal{D}_{a^{*}}$.  This accounts for the prefactor
  $C_{(\vp,\psi)}$ in~\eqref{e:m_locres_est}.  We then follow the matching scheme in the
  proof of Theorem~\ref{mt:ue_maps}, obtaining $\Lambda =
  (1-\kappa)^{(n_0+\tau(a^*(1-\kappa)^{-1}))^{-1}}$.
\end{proof}


\subsection{Proof of global results}

Since $\gamma([a,b])$ is compact, we may assume it lies in a subset $\mcal{E}_0$ of $\mcal
E$ with uniformly bounded derivatives and a minimum expansion $\la_0>2$ as in
Section~\ref{ss:pf_loc_res}. This implies in particular that the set $\mcal{D}_{a^*}$ can
be taken to be uniform for all $g \in \gamma([a,b])$.

For each $g \in \mcal{E}_0$, there are three numbers that are relevant:
\begin{enumerate}[(1)]
\item $\ve(g)$, which describes the size
of the neighborhood in which our local results apply;
\item $\kappa(g)>0$ as given by Lemma~\ref{l:variable_pos};
\item
$n(g) \defas n_{0} (g) + \ta (a^{*} (1 - \ka)^{-1})$ where $n_{0}$ is as in
Lemma~\ref{l:single_pos} and involves the enveloping time of $g$.
\end{enumerate}
These quantities depend not just on the derivatives of $g$ but on its geometry, {\it i.e.}
how $\mcal{A}_n(g)$ partitions $\mcal{S}^1$, how quickly the covering property takes hold,
and so on.  Our local results imply that for all $f_i \in \mcal{U}_{\ve(g)}(g)$ and $\vp,
\psi \in \mcal{D}_{a^{*}}$, $n(g)$ is the number of steps at the end of which we are
guaranteed that the pair of densities has been matched once, and that their unmatched
parts, renormalized, are returned to $\mcal{D}_{a^*}$.  Moreover, the amount matched is $
\geqs \kappa(g)$.

\begin{proof}[Proof of Theorem~\ref{mt:pw_global}]
  For each $t \in [a,b]$, let $V_\alpha(t)$ denote the $\alpha$-neighborhood of $t$ in
  $\mathbb R$, and let $\alpha(t)>0$ be such that $\gamma(V_{\alpha(t)}(t) \cap [a,b])
  \subset \mcal{U}_{\ve(\gamma(t))}(\gamma(t))$.  By compactness, there exist $z_1< z_2 <
  \cdots < z_D$ such that $\bigcup_j V_{\frac12 \alpha(z_j)}(z_j)$ covers $[a,b]$. Let
  $g_j = \gamma(z_j)$, $V_j = V_{\alpha(z_j)}(z_j)$, and $\frac12 V_j = V_{\frac12
    \alpha(z_j)}(z_j)$.  Define $\delta_0 \defas \min_j \frac{\alpha(z_j)} {2n(g_j)}$.

  We claim that if $t_i$ defines a partition on $[a,b]$, the mesh $\Delta$ of which is
  $\leqs \delta_0$, then the $f_i=\gamma(t_i)$ will have the desired properties. Consider
  $f_i$ for arbitrary $i$, and let $\vp, \psi \in \mcal{D}_{a^*}$.  Since $t_i \in \frac12
  V_j$ for some $j$, our choice of $\delta_0$ assures that $f_i, f_{i+1}, \cdots,
  f_{i+n(g_{j})-1} \in \mcal{U}_{\ve(g_j)}(g_j)$. Thus a matching will take place, and the
  process can be repeated again at the end of $n(g_{j})$ steps. Since $\max_j n(g_j) <
  \infty$ and $\min_j \kappa (g_j)>0$, exponential loss of memory is proved.
\end{proof}

The argument above applies to $\gamma$ defined on a compact interval. If the curve in
$\mcal E$ is infinite, one simply divides it up into suitably short segments and treats
them one at a time (see Remark~\ref{rmk:variable}).


\bibliographystyle{amsplain}
\bibliography{lasota_yorke_submission}

\providecommand{\bysame}{\leavevmode\hbox to3em{\hrulefill}\thinspace}
\providecommand{\MR}{\relax\ifhmode\unskip\space\fi MR }
\providecommand{\MRhref}[2]{%
  \href{http://www.ams.org/mathscinet-getitem?mr=#1}{#2}
}
\providecommand{\href}[2]{#2}
\begin{thebibliography}{10}

\bibitem{BvYls1993}
V.~Baladi and L.-S. Young, \emph{On the spectra of randomly perturbed expanding
  maps}, Comm. Math. Phys. \textbf{156} (1993), no.~2, 355--385. \MR{MR1233850
  (94g:58172)}

\bibitem{BvYls1994}
\bysame, \emph{Erratum: ``{O}n the spectra of randomly perturbed expanding
  maps'' [{C}omm.\ {M}ath.\ {P}hys.\ {\bf 156} (1993), no.\ 2, 355--385;
  {MR}1233850 (94g:58172)]}, Comm. Math. Phys. \textbf{166} (1994), no.~1,
  219--220. \MR{MR1309547 (95k:58125)}

\bibitem{Bp1992}
P.~H. Baxendale, \emph{Stability and equilibrium properties of stochastic flows
  of diffeomorphisms}, Diffusion processes and related problems in analysis,
  Vol.\ II (Charlotte, NC, 1990), Progr. Probab., vol.~27, Birkh\"auser Boston,
  Boston, MA, 1992, pp.~3--35. \MR{MR1187984 (93h:58167)}

\bibitem{BxFrGa1999}
X.~Bressaud, R.~Fern{\'a}ndez, and A.~Galves, \emph{Decay of correlations for
  non-{H}\"olderian dynamics. {A} coupling approach}, Electron. J. Probab.
  \textbf{4} (1999), no. 3, 19 pp. (electronic). \MR{MR1675304 (2000j:60049)}

\bibitem{BxLc2002}
X.~Bressaud and C.~Liverani, \emph{Anosov diffeomorphisms and coupling},
  Ergodic Theory Dynam. Systems \textbf{22} (2002), no.~1, 129--152.
  \MR{MR1889567 (2003e:37032)}

\bibitem{BlSyCn1991}
L.~A. Bunimovich, Ya.~G. Sina{\u\i}, and N.~I. Chernov, \emph{Statistical
  properties of two-dimensional hyperbolic billiards}, Uspekhi Mat. Nauk
  \textbf{46} (1991), no.~4(280), 43--92, 192. \MR{MR1138952 (92k:58151)}

\bibitem{Cn2006}
N.~Chernov, \emph{Advanced statistical properties of dispersing billiards}, J.
  Stat. Phys. \textbf{122} (2006), no.~6, 1061--1094. \MR{MR2219528
  (2007h:37047)}

\bibitem{CnDd2008}
N.~Chernov and D.~Dolgopyat, \emph{{B}rownian {B}rownian motion - {I}}, Memoirs
  of the American Mathematical Society (2008), to appear.

\bibitem{HfKg1982}
F.~Hofbauer and G.~Keller, \emph{Ergodic properties of invariant measures for
  piecewise monotonic transformations}, Math. Z. \textbf{180} (1982), no.~1,
  119--140. \MR{MR656227 (83h:28028)}

\bibitem{Kh1997}
H.~Kunita, \emph{Stochastic flows and stochastic differential equations},
  Cambridge Studies in Advanced Mathematics, vol.~24, Cambridge University
  Press, Cambridge, 1997, Reprint of the 1990 original. \MR{MR1472487
  (98e:60096)}

\bibitem{LaYj1973}
A.~Lasota and J.~A. Yorke, \emph{On the existence of invariant measures for
  piecewise monotonic transformations}, Trans. Amer. Math. Soc. \textbf{186}
  (1973), 481--488 (1974). \MR{MR0335758 (49 \#538)}

\bibitem{LaYj1996}
\bysame, \emph{When the long-time behavior is independent of the initial
  density}, SIAM J. Math. Anal. \textbf{27} (1996), no.~1, 221--240.
  \MR{MR1373154 (97a:47043)}

\bibitem{LJy1985}
Y.~Le~Jan, \emph{On isotropic {B}rownian motions}, Z. Wahrsch. Verw. Gebiete
  \textbf{70} (1985), no.~4, 609--620. \MR{MR807340 (87a:60090)}

\bibitem{LkSBeYls2008}
K.~Lin, E.~Shea-Brown, and L.-S. Young, \emph{Reliability of coupled
  oscillators}, to appear.

\bibitem{Lc1995a}
C.~Liverani, \emph{Decay of correlations}, Ann. of Math. (2) \textbf{142}
  (1995), no.~2, 239--301. \MR{MR1343323 (96e:58090)}

\bibitem{Lc1995}
\bysame, \emph{Decay of correlations for piecewise expanding maps}, J. Statist.
  Phys. \textbf{78} (1995), no.~3-4, 1111--1129. \MR{MR1315241 (96d:58077)}

\bibitem{MnYls2002}
N.~Masmoudi and L.-S. Young, \emph{Ergodic theory of infinite dimensional
  systems with applications to dissipative parabolic {PDE}s}, Comm. Math. Phys.
  \textbf{227} (2002), no.~3, 461--481. \MR{MR1910827 (2003g:37148)}

\bibitem{Mj1999}
J.~C. Mattingly, \emph{Ergodicity of {$2$}{D} {N}avier-{S}tokes equations with
  random forcing and large viscosity}, Comm. Math. Phys. \textbf{206} (1999),
  no.~2, 273--288. \MR{MR1722141 (2000k:76040)}

\bibitem{Rd1989}
D.~Ruelle, \emph{The thermodynamic formalism for expanding maps}, Comm. Math.
  Phys. \textbf{125} (1989), no.~2, 239--262. \MR{MR1016871 (91a:58149)}

\bibitem{Rd2004}
\bysame, \emph{Thermodynamic formalism}, second ed., Cambridge Mathematical
  Library, Cambridge University Press, Cambridge, 2004, The mathematical
  structures of equilibrium statistical mechanics. \MR{MR2129258 (2006a:82008)}

\bibitem{Rm1989}
M.~Rychlik, \emph{Regularity of the metric entropy for expanding maps}, Trans.
  Amer. Math. Soc. \textbf{315} (1989), no.~2, 833--847. \MR{MR958899
  (90a:28027)}

\bibitem{Yls1998}
L.-S. Young, \emph{Statistical properties of dynamical systems with some
  hyperbolicity}, Ann. of Math. (2) \textbf{147} (1998), no.~3, 585--650.
  \MR{MR1637655 (99h:58140)}

\bibitem{Yls1999}
\bysame, \emph{Recurrence times and rates of mixing}, Israel J. Math.
  \textbf{110} (1999), 153--188. \MR{MR1750438 (2001j:37062)}

\end{thebibliography}

\end{document}